\newtheorem{theorem}{Theorem}[section]
\newtheorem{lemma}[theorem]{Lemma}
\newtheorem{corollary}[theorem]{Corollary}
\theoremstyle{definition}
\newtheorem{definition}[theorem]{Definition}
\newtheorem{conjecture}[theorem]{Conjecture}
\newtheorem{remark}[theorem]{Remark}
\newtheorem{general remarks}[theorem]{General remarks}
\newcommand{\id}{\operatorname{id}}
\renewcommand{\span}{\operatorname{span}}
\newcommand{\ben}{\begin{enumerate}}
\newcommand{\een}{\end{enumerate}}
\begin{document}
\title[Group graded PI-algebras and their codimension growth] {Group graded PI-algebras and their codimension growth}

\author{Eli Aljadeff}
\address{Department of Mathematics, Technion-Israel Institute of
Technology, Haifa 32000, Israel}
\email{aljadeff@tx.technion.ac.il}

\date{Feb. 6, 2010}



\keywords{graded algebra, polynomial identity}

\thanks {The author was partially supported by the ISRAEL SCIENCE FOUNDATION
(grant No. 1283/08) and by the E.SCHAVER RESEARCH FUND}


\begin{abstract} Let $W$ be an associative \textit{PI}-
algebra over a field $F$ of characteristic zero. Suppose $W$ is
$G$-graded where $G$ is a finite group. Let $\exp(W)$ and
$\exp(W_{e})$ denote the codimension growth of $W$ and of the
identity component $W_{e}$, respectively. The following inequality
had been conjectured by Bahturin and Zaicev: $\exp(W)\leq |G|^2
\exp(W_{e}).$ The inequality is known in case the algebra $W$ is
affine (i.e. finitely generated). Here we prove the conjecture in
general.
\end{abstract}

\maketitle

\begin{section}{Introduction} \label{Introduction}

Let $W$ be any $PI$-algebra over a field of characteristic zero.
Suppose $W$ is $G$-graded where $G$ is any finite group. Let
$\exp(W)$ and $\exp(W_{e})$ be the exponents of the algebra $W$
and of its $e$-component (determined by the $G$-grading). The main
objective of this paper is to prove the following conjecture (see
\cite{BZ}):

\begin{conjecture} \label{Bahturin-Zaicev-Conjecture}
$$
\exp(W) \leq \mid G\mid^{2} \exp(W_e).
$$

\end{conjecture}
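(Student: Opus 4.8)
The plan is to reduce the general (not necessarily affine) case to the finite‑dimensional case through Kemer's structure theory, and then to exploit the special form of a Grassmann envelope. Throughout we may assume $\exp(W)\ge 1$, since for $\exp(W)=0$ the right‑hand side is non‑negative and there is nothing to prove. First I would observe that $c_{n}(W)$, hence $\exp(W)$, depends only on the $T$‑ideal $\Id(W)$; moreover the multilinear ordinary identities of $W$ in $n$ variables are recovered from the multilinear \emph{$G$-graded} identities of $W$ in $n$ variables by substituting homogeneous elements, and likewise for $W_{e}$ upon substituting $e$-homogeneous elements. So both $\exp(W)$ and $\exp(W_{e})$ are determined by the $T$-ideal of $G$-graded identities $\Id_{G}(W)$. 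Invoking the $G$-graded version of Kemer's representability theorem, $W$ is $G$-graded PI-equivalent to the Grassmann envelope $E(A)$ of a finite‑dimensional $(G\times\Z_{2})$-graded algebra $A$; equipping $E(A)$ with the $G$-grading induced from the $G$-grading of $A$, we get $\Id_{G}(W)=\Id_{G}(E(A))$, hence $\exp(W)=\exp(E(A))$ and $\exp(W_{e})=\exp\big((E(A))_{e}\big)$. Since $(E(A))_{e}=E(A_{e})$ as ungraded algebras, where $A_{e}$ is the $e$-component of $A$ viewed as a finite‑dimensional superalgebra, it suffices to prove $\exp(E(A))\le|G|^{2}\exp(E(A_{e}))$.

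Next I would approximate $E(A)$ from inside by finite‑dimensional graded subalgebras. For $k\ge 1$ let $E^{(k)}$ be the Grassmann algebra on $k$ generators and put $E^{(k)}(A)=E^{(k)}_{0}\otimes A_{0}\oplus E^{(k)}_{1}\otimes A_{1}$, a finite‑dimensional $G$-graded subalgebra of $E(A)$; the union $\bigcup_{k}E^{(k)}(A)=E(A)$ is directed, and $(E^{(k)}(A))_{e}=E^{(k)}(A_{e})\subseteq E(A_{e})$, so $\exp\big((E^{(k)}(A))_{e}\big)\le\exp(E(A_{e}))$ for every $k$. Since each $E^{(k)}(A)$ is finite‑dimensional, hence affine, the known affine case of the conjecture gives $\exp\big(E^{(k)}(A)\big)\le|G|^{2}\exp(E(A_{e}))$ for all $k$. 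Therefore it is enough to show that $\exp\big(E^{(k_{0})}(A)\big)\ge\exp(E(A))$ for \emph{some} finite $k_{0}$, the reverse inequality being automatic because $E^{(k_{0})}(A)$ is a subalgebra of $E(A)$.

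For this last step, which I expect to be the main obstacle, set $d=\exp(E(A))$. By the Giambruno--Zaicev description of codimension growth there is a constant $c_{0}$ and, for every $s$, a multilinear polynomial $f_{s}\notin\Id(E(A))$ that is alternating in $s$ pairwise disjoint sets of $d$ variables and uses at most $c_{0}$ further variables, and $\exp\ge d$ is certified by the existence of such polynomials for all $s$. The key point I would establish is that the standard non‑trivial evaluation of $f_{s}$ on $E(A)$ may be taken supported on a \emph{fixed} finite set of Grassmann generators, independent of $s$: to the variables of each of the $s$ alternating blocks one assigns elements $\epsilon\otimes a$, with $\epsilon$ a fixed monomial in at most $2d$ Grassmann generators and the same assignment used in every block — legitimate precisely because $f_{s}$ is not alternating between blocks — where $a$ runs over a basis of a family of simple components of $A/J(A)$ realizing $d$, while the $\le c_{0}$ remaining variables are sent to radical elements of $A$ using at most $2c_{0}$ further Grassmann generators. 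Thus every $f_{s}$ has a non‑trivial evaluation inside $E^{(k_{0})}(A)$ with $k_{0}=2d+2c_{0}$, so $f_{s}\notin\Id(E^{(k_{0})}(A))$ for all $s$ and hence $\exp\big(E^{(k_{0})}(A)\big)\ge d$. Taking $U=E^{(k_{0})}(A)$, a finite‑dimensional $G$-graded algebra with $\exp(U)=\exp(W)$ and $U_{e}\subseteq E(A_{e})$, the chain $\exp(W)=\exp(U)\le|G|^{2}\exp(U_{e})\le|G|^{2}\exp(E(A_{e}))=|G|^{2}\exp(W_{e})$ completes the proof.

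Two points will need care. First, one must have the $G$-graded Kemer representability theorem for not‑necessarily‑affine $W$; Kemer's original argument does not use finite generation, and neither does its $G$-graded refinement, so this should be available. Second, and this is the genuine difficulty, one cannot reach the affine case by merely passing to an arbitrary finitely generated subalgebra of $W$ with the same exponent: the polynomial prefactors in the codimension asymptotics of finitely generated subalgebras are not uniformly bounded, so the property ``$\exp\le M$'' is not obviously preserved under directed unions of subalgebras. Passing first to a Grassmann envelope is exactly what repairs this, because there the multialternation witnessing the exponent can be confined to boundedly many Grassmann generators.
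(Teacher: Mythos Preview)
Your reduction via $G$-graded Kemer representability to the Grassmann envelope $E(A)$ of a finite-dimensional $(\mathbb{Z}/2\mathbb{Z}\times G)$-graded algebra $A$ is the same first move the paper makes, and your observation $(E(A))_{e}=E(A_{e})$ is correct. The gap is in the next step: your claim that $\exp\bigl(E^{(k_{0})}(A)\bigr)\ge\exp(E(A))$ for some finite $k_{0}$ is false in general, and the justification you sketch for it breaks down.

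Take $A=M_{2}(F)$ with the $\mathbb{Z}/2\mathbb{Z}$-grading $A_{0}=Fe_{11}\oplus Fe_{22}$, $A_{1}=Fe_{12}\oplus Fe_{21}$ (and trivial $G$-grading). Then $E(A)\cong M_{1,1}(E)$ and, by Giambruno--Zaicev, $\exp(E(A))=\exp_{\mathbb{Z}/2\mathbb{Z}}(A)=\dim_{F}A=4$. On the other hand, for every $k\ge 1$ the algebra $E^{(k)}(A)$ is finite-dimensional with Jacobson radical $J=\bigl(E^{(k)}_{0}\cap\mathrm{rad}\,E^{(k)}\bigr)\otimes A_{0}\,\oplus\, E^{(k)}_{1}\otimes A_{1}$, so $E^{(k)}(A)/J\cong A_{0}\cong F\times F$ and the Giambruno--Zaicev formula gives $\exp\bigl(E^{(k)}(A)\bigr)\le 2$. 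Thus no finite truncation captures the exponent of $E(A)$. The reason your proposed evaluation fails is exactly the one you tried to dismiss: if you assign the \emph{same} odd Grassmann monomial $\epsilon$ to the copy of a fixed $a\in A_{1}$ in each of the $s$ alternating blocks, then every monomial of the multilinear polynomial $f_{s}$ contains all $s$ of these copies, so its Grassmann part contains $\epsilon^{s}=0$. A non-vanishing evaluation must use \emph{fresh} odd generators in each block, and their number therefore grows linearly with $s$; there is no uniform $k_{0}$.

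This is precisely why the paper does \emph{not} reduce to the affine case of the conjecture. Instead it stays with the finite-dimensional algebra $A$ itself (not its Grassmann envelope), uses $\exp(E(A))=\exp_{\mathbb{Z}/2\mathbb{Z}}(A)$ and $\exp(E(A_{e}))=\exp_{\mathbb{Z}/2\mathbb{Z}}(A_{\mathbb{Z}/2\mathbb{Z}\times\{e\}})$, and then proves the structural inequality
\[
\exp^{\mathrm{Conj}}_{\Gamma}(A)\;\le\;[\Gamma:K]^{2}\,\exp^{\mathrm{Conj}}_{K}(A_{K})
\]
for an arbitrary finite-dimensional $\Gamma$-graded algebra and subgroup $K\le\Gamma$, applied with $\Gamma=\mathbb{Z}/2\mathbb{Z}\times G$ and $K=\mathbb{Z}/2\mathbb{Z}\times\{e\}$. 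That inequality is established via the Bahturin--Sehgal--Zaicev description of $\Gamma$-simple algebras and a double-coset counting argument; it genuinely extends the affine result rather than invoking it.
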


The case where $W$ is affine was proved in \cite{Al}. Here we show
that the conjecture holds also when $W$ is non affine.

\begin{theorem} \label{Main theorem} Let $W$ be as above. Then $\exp(W) \leq |G|^2 \exp(W_e).$
\end{theorem}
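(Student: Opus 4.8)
The plan is to reduce the general (non-affine) case to the affine case, which is already proved in \cite{Al}. The reduction combines two elementary observations about the exponent with one substantial ingredient, the $G$-graded analogue of Kemer's representability theorem.

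First, I record that $\exp$ is an invariant of the $T$-ideal of ordinary identities: for an $F$-algebra $B$ one has $c_n(B)=\dim\bigl(P_n/(P_n\cap\mathrm{id}(B))\bigr)$ with $P_n$ the space of multilinear polynomials of degree $n$, so $c_n(B)$, and hence $\exp(B)=\limsup_n c_n(B)^{1/n}$, depends only on $\mathrm{id}(B)$. Second --- and this is where the grading is used --- for a $G$-graded algebra both $\mathrm{id}(W)$ and $\mathrm{id}(W_e)$ are determined by the graded $T_G$-ideal $\mathrm{id}_G(W)$. Indeed, in the free $G$-graded algebra $F\langle X_G\rangle$, $X_G=\bigcup_{g\in G}X_g$, a multilinear polynomial $f(x_1,\dots,x_n)$ lies in $\mathrm{id}(W)$ if and only if for every choice $g_1,\dots,g_n\in G$ the graded polynomial obtained by setting $\deg x_i=g_i$ lies in $\mathrm{id}_G(W)$; since $\mathrm{char}\,F=0$, $\mathrm{id}(W)$ is generated by its multilinear part and is thus recovered from $\mathrm{id}_G(W)$. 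Similarly $\mathrm{id}(W_e)=\mathrm{id}_G(W)\cap F\langle X_e\rangle$, where $F\langle X_e\rangle$ is an ordinary free algebra, so $\mathrm{id}(W_e)$ is recovered from $\mathrm{id}_G(W)$ as well.

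Next I would invoke graded representability: a $G$-graded $PI$-algebra over a field of characteristic zero with $G$ finite has the same graded identities as some \emph{affine} $G$-graded $F$-algebra $A$, so that $\mathrm{id}_G(A)=\mathrm{id}_G(W)$. By the previous paragraph this forces $\mathrm{id}(A)=\mathrm{id}(W)$ and $\mathrm{id}(A_e)=\mathrm{id}(W_e)$, hence $\exp(A)=\exp(W)$ and $\exp(A_e)=\exp(W_e)$. In particular $A$ is $PI$ and affine, so the affine case \cite{Al} applies to it and gives $\exp(A)\le|G|^2\exp(A_e)$. Combining,
$$\exp(W)=\exp(A)\le|G|^2\exp(A_e)=|G|^2\exp(W_e),$$
which is the assertion of the theorem.

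I expect the main obstacle to be the representability step: one needs a finitely generated $G$-graded algebra whose graded identities coincide with those of $W$ exactly, not merely through some bounded degree. This is the $G$-graded Specht/representability property for $PI$-algebras in characteristic zero; granted that input, the rest of the argument is formal, the only points requiring care being the precise bookkeeping of the $G$-grading on $F\langle X_G\rangle$ and the compatibility of the multilinearization reduction with passing to the neutral component.
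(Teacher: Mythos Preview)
The representability step you invoke is false as stated, and this is not a technicality but the heart of the matter. There is in general \emph{no} affine $G$-graded algebra with the same $G$-graded identities as a given non-affine one. Already in the ungraded case (take $G$ trivial) the infinite Grassmann algebra $E$ is a counterexample: $E$ satisfies no standard identity $s_n$ (substituting $n$ odd generators gives $n!\,e_1\cdots e_n\neq 0$), whereas any affine PI-algebra in characteristic zero is PI-equivalent, by Kemer's theorem for affine algebras, to a finite-dimensional algebra and hence satisfies some $s_n$. So $E$ is not PI-equivalent to any affine algebra, and your reduction cannot go through.

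What $G$-graded representability actually provides (Theorem~\ref{PI-equivalence-general}) is a finite-dimensional $\mathbb{Z}/2\mathbb{Z}\times G$-graded algebra $A$ with $\id_G(W)=\id_G(A^{*})$, where $A^{*}$ is the Grassmann envelope; $A^{*}$ is \emph{not} affine. One then has $\exp(W)=\exp_{\mathbb{Z}/2\mathbb{Z}}(A)$ and $\exp(W_e)=\exp_{\mathbb{Z}/2\mathbb{Z}}(A_{\mathbb{Z}/2\mathbb{Z}\times\{e\}})$ via Giambruno--Zaicev, so the inequality to be proved becomes a structural statement comparing a finite-dimensional $\mathbb{Z}/2\mathbb{Z}\times G$-graded algebra with its $\mathbb{Z}/2\mathbb{Z}\times\{e\}$-part. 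This is strictly more general than the $K=\{e\}$ case treated in \cite{Al}, because the extra $\mathbb{Z}/2\mathbb{Z}$ factor cannot be stripped away; it is exactly why the paper is forced to prove Theorem~\ref{Main} for an arbitrary subgroup $K\leq G$ rather than just quoting the affine result.
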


The proof is based on three ingredients. The first is the
representability theorem for $G$-graded $PI$-algebras which allows
us to ``pass" from arbitrary algebras to finite dimensional
algebras. The second is a precise description of the structure of a
$G$-graded simple algebra (as determined by Bahturin, Sehgal and
Zaicev) and the third is the relation (established by Giambruno and
Zaicev) between the $\exp(W)$ and the structure of $W$ when $W$ is a
finite dimensional algebra. In fact these ingredients were also used
in the proof of the conjecture in the affine case. The point here is
that when passing to finite dimensional algebras (with the
representability theorem for non-affine $G$-graded algebras), we are
``forced" to consider some statements which are \textit{more
general} than Bahturin-Zaicev conjecture. In section $1$ (after some
preliminaries) we formulate these statements and in particular
translate the problem to finite dimensional algebras. In section $2$
we solve the problem for finite dimensional algebras in sufficient
generality to imply Bahturin-Zaicev conjecture for non affine
algebras.

\end{section}

\begin{section}{Preliminaries and translation of the problem to finite dimensional algebras}

We start this section by recalling some definitions. Let $X$ be a
countable set of indeterminates and $F\langle X\rangle$ the
corresponding free algebra over $F$. Let $\id(W)$ be the $T$-ideal
of identities of $W$ in $F\langle X\rangle$ and let $\mathcal{W}=
F\langle X\rangle/\id(W)$ be the relatively free algebra of $W$. We
denote by $c_n(W)$ the dimension of the subspace spanned by
multilinear elements in $n$ free generators of $\mathcal{W}$. We
refer to $c_n(W)$ as the $n-th$ term of the codimension sequence of
the algebra $W$. Our interest is in the (exponential) asymptotic
behavior of the sequence of codimensions, namely in
$$
\exp(W)=\lim_{n\to \infty}\sqrt[n]{c_n(W)}.
$$

It is known that $\exp(W)$ exists and moreover it assumes only
integer values (see~\cite{gz1},~\cite{gz2},~\cite{reg2}).

If the algebra~$W$ is $G$-graded, where $G$ is any group, we may
consider $G$-graded polynomial identities and the corresponding
$G$-graded exponent. Since our main theorem will be formulated in
these terms, we recall them here as well. Let $X_G = \cup_{g\in
G}X_{g}$ where $X_g = \{x_{1,g}, x_{2,g},\ldots \}$ is a countable
set of indeterminates and let $F\langle X_G \rangle$ be the free
$G$-graded algebra on the set $X_G$ (here, a monomial
$x_{i_1,g_{i_1}}x_{i_2,g_{i_2}}\cdots x_{i_r,g_{i_r}} $ is
homogeneous of degree $g=g_{i_1}g_{i_2}\cdots g_{i_r}$). In
$F\langle X_G \rangle$ we consider $\id_{G}(W)$, the ideal of
$G$-graded identities of $W$, and denote by
$\mathcal{W}_{G}=F\langle X_G \rangle/ \id_{G}(W)$ the corresponding
relatively free $G$-graded algebra. For $n=1, 2,\ldots,$ we denote
by $c^G_n(W)$ the dimension of the subspace of $\mathcal{W}_{G}$
spanned by multilinear elements in $n$ ($G$-graded) free generators.
We call $\{c^G_n(W)\}_{n=1}^{\infty}$ the sequence of
$G$-codimensions of $W$. The $G$-graded exponent of $W$ is given
(when it exists) by

$$
\exp_{G}(W)=\lim_{n\to \infty}\sqrt[n]{c^G_n(W)}.
$$

\begin{remark}
The limit $\lim_{n\to \infty}\sqrt[n]{c^G_n(W)}$ is known to exist
when $G$ is abelian (see~\cite{AGM} and \cite{AB} in case $W$ is
affine and \cite{GM} in case $W$ is non affine). Moreover in that
case it assumes only integer values. In the general case (i.e. when
$G$ is not necessarily abelian) it is known only that the sequence
of codimensions $c^G_n(A)$ is exponetially bounded.
\end{remark}

As mentioned in the introduction, we wish to translate the problem
to the ``world" of finite dimensional algebras. Before doing it
let us recall two results, the first of which is Kemer's
representability theorem for arbitrary \textit{PI}-algebras (see
\cite{Kemer1}, \cite{Kemer2}, \cite{Kemer3}).

\begin{theorem}[\textit{PI}-equivalence]\label{PI-equivalence-general}
Let $W$ be a \textit{PI}-algebra over $F$. Then there exists a
field extension~$L$ of $F$ and a finite dimensional
$\mathbb{Z}/2\mathbb{Z}$-graded algebra $A$ over $L$ such that
$\id(W)=\id(A^{*})$ where $A^{*}$ is the Grassmann envelope of
$A$.
\end{theorem}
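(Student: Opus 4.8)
The statement is Kemer's representability theorem, and I would prove it by reconstructing the architecture of Kemer's argument in the language of $T$-ideals. Since the base field has characteristic zero, the $T$-ideal $\Gamma=\id(W)\subset F\langle X\rangle$ is determined by its multilinear components $\Gamma\cap P_n$, where $P_n$ is the space of multilinear polynomials in $n$ of the free generators; equivalently it is determined by the cocharacter sequence $\{\chi_n(\Gamma)\}$, the decomposition of the $S_n$-modules $P_n/(\Gamma\cap P_n)$ into irreducibles $\chi_\lambda$, $\lambda\vdash n$. All of this is insensitive to extension of scalars, which is exactly why one is free to enlarge $F$ to a convenient field $L$ when building the finite-dimensional model, and why the field extension in the statement is harmless. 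The plan is then to combine two assertions, both due to Kemer: \emph{(i)} every PI-algebra $W$ is PI-equivalent to the Grassmann envelope $B^{*}$ of a finitely generated superalgebra $B$; and \emph{(ii)} every finitely generated PI superalgebra is equivalent, as a $\mathbb{Z}/2\mathbb{Z}$-graded algebra, to a finite-dimensional one, say $A$. Granting both, an equivalence $B\sim A$ of superalgebras forces $B^{*}\sim A^{*}$ as ordinary algebras, whence $\id(A^{*})=\id(B^{*})=\id(W)$ with $A$ finite-dimensional over $L$ — which is the claim.

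The common engine behind both assertions is Kemer's numerical invariant of a $T$-ideal (resp.\ $T_2$-ideal), the \emph{Kemer index} $(d,s)$, read off from the stable shape of the Young diagrams occurring in the cocharacter sequence (by the Amitsur--Regev hook theorem these diagrams lie inside a fixed hook to begin with); it refines the PI-exponent. Attached to it are the \emph{Kemer polynomials} of $\Gamma$: multilinear polynomials outside $\Gamma$ carrying the maximal amount of alternation compatible with $\Gamma$ — alternating on many disjoint ``small'' sets of $d$ variables plus a bounded number of further ``big'' alternating sets, the count of the latter controlled by $s$. The structural fact that drives everything is that, by a key lemma, enlarging $\Gamma$ to $\Gamma+\langle f\rangle_T$ for such an $f$ strictly lowers the Kemer index, so one may induct on it: if $\Gamma$ is already \emph{basic}, meaning it equals the ideal of (graded) identities of a finite-dimensional (super)algebra whose structural parameters realize $(d,s)$, we are done; otherwise one picks a Kemer polynomial $f\notin\Gamma$, applies the inductive hypothesis to the larger ideal $\Gamma+\langle f\rangle_T$, analyzes separately the ``slice'' of $\Gamma$ singled out by $f$ (governed by a single finite-dimensional model, built from the representation theory of the relatively free algebra and its embedding into matrices over a commutative ring following Razmyslov, then truncated so as to keep exactly the index $(d,s)$), and amalgamates the two pieces.

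Assertion \emph{(ii)} is precisely the graded analogue of Kemer's theorem that an affine PI-algebra is PI-equivalent to a finite-dimensional one, and is proved by carrying the index/Kemer-polynomial induction above through the graded category — graded cocharacters, graded Kemer index, graded Kemer polynomials, and finite-dimensional models built from graded-simple components together with a graded radical. For assertion \emph{(i)}, and hence for non-affine $W$, the essential new point is that the infinite-dimensional Grassmann algebra $E$ is genuinely not PI-equivalent to any finite-dimensional algebra, so the passage to finite dimensions must be mediated by the $\mathbb{Z}/2\mathbb{Z}$-grading together with the Grassmann-envelope functor $A\mapsto A^{*}$. Concretely, one ``unfolds'' $W$ into a superalgebra by separating the Grassmann-even and Grassmann-odd contributions to its identities and shows, using the structure theory of $T$-ideals, that the resulting supervariety is generated by a finitely generated superalgebra $B$ with $\id(B^{*})=\id(W)$.

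The main obstacle, common to both assertions, is the base case of the induction together with the amalgamation step: one must prove that a $T$-ideal (or $T_2$-ideal) whose index cannot be lowered by adjoining a Kemer polynomial is \emph{exactly} the ideal of identities of a finite-dimensional (super)algebra, and that the finitely many ``irreducible'' pieces thrown off along the induction can be glued into one finite-dimensional model with the same identities. This is where the interplay between the Jacobson radical and the semisimple part has to be pinned down — via a (graded) Wedderburn--Malcev decomposition and a delicate accounting of how alternating sets of variables are ``absorbed'' by nilpotent elements, which is what links the nilpotency index of the radical to the secondary invariant $s$ — and where the ``Phoenix'' property of Kemer polynomials (the relevant alternation pattern, once present in a $T$-ideal, survives all the operations used in the induction) does the real work. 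Everything else — the reductions via multilinearity, invariance under base change, and the functoriality of the Grassmann envelope — is routine by comparison.
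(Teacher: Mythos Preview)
The paper does not prove this theorem at all: it is quoted as Kemer's representability theorem with references to \cite{Kemer1}, \cite{Kemer2}, \cite{Kemer3}, and is used as a black box in the reduction to finite-dimensional algebras. Your outline is a faithful high-level reconstruction of Kemer's original architecture (reduction to a finitely generated superalgebra via the Grassmann-envelope trick, then the super-Kemer index and Kemer-polynomial induction to reach a finite-dimensional model), so in that sense you are following the ``same'' route the paper implicitly relies on --- but you should be aware that the paper itself contains no argument here to compare against, and in context a one-line citation would have sufficed.
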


Next we recall a result of Giambruno and Zaicev (see \cite{gz2})
which describes the (exponential) codimension growth of an algebra
$W$ in terms of the structure of the $\mathbb{Z}/2\mathbb{Z}$-graded
algebra $A$ appearing in Kemer's result.

\begin{theorem} \label{Giam-Zaicev}Let $W$ be an arbitrary
\textit{PI}-algebra over a field of characteristic zero $F$. Let $A$
be a $\mathbb{Z}/2\mathbb{Z}$-graded algebra as in Kemer's theorem.
Then $\exp(W)=\exp_{\mathbb{Z}/2\mathbb{Z}}(A)$.
\end{theorem}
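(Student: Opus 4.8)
The plan is to reduce the statement to a comparison of the ordinary codimension sequence of the Grassmann envelope $A^{*}$ with the $\mathbb{Z}/2\mathbb{Z}$-graded codimension sequence of $A$, and to show that these have the same exponential growth rate. Since $\id(W)=\id(A^{*})$, we have $c_{n}(W)=c_{n}(A^{*})$ for all $n$, hence $\exp(W)=\exp(A^{*})$, and it remains to prove $\exp(A^{*})=\exp_{\mathbb{Z}/2\mathbb{Z}}(A)$. The link between the two is the ``super'' dictionary: if $f(x_{1},\dots ,x_{n})$ is multilinear and one substitutes homogeneous elements $a_{i}\otimes u_{i}\in A^{*}$, with $a_{i}\in A$ of parity $\epsilon_{i}$ and $u_{i}\in E_{\epsilon_{i}}$, then pulling the Grassmann factors to the right introduces, in the monomial indexed by a permutation $\sigma$, the sign $(-1)^{\beta_{\epsilon}(\sigma)}$, where $\beta_{\epsilon}(\sigma)$ is the number of inversions of $\sigma$ among the $\epsilon$-odd indices. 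Hence $f\in\id(A^{*})$ if and only if, for every parity vector $\epsilon\in\{0,1\}^{n}$, the polynomial $f^{\epsilon}$ obtained from $f$ by multiplying each monomial by $(-1)^{\beta_{\epsilon}(\sigma)}$ lies in $\id_{\mathbb{Z}/2\mathbb{Z}}(A)$, with its variables carrying the parities prescribed by $\epsilon$. Each map $f\mapsto f^{\epsilon}$ is a bijection of the multilinear space $P_{n}$, so $P_{n}\cap\id(A^{*})$ is the intersection of the $2^{n}$ preimages of the graded-identity subspaces; since the codimension of an intersection is at most the sum of the codimensions, this gives $c_{n}(A^{*})\le c_{n}^{\mathbb{Z}/2\mathbb{Z}}(A)$ and therefore $\exp(A^{*})\le\exp_{\mathbb{Z}/2\mathbb{Z}}(A)$.

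For the reverse inequality I would argue structurally rather than by comparing codimensions directly, since the passage from the $2^{n}$ parity patterns to a single ordinary identity loses information in general (already for $A=F\cdot 1\oplus F\cdot e$ with $e$ odd and $e^{2}=0$ one has $c_{n}(A^{*})=1$ while $c_{n}^{\mathbb{Z}/2\mathbb{Z}}(A)=n+1$, so only a polynomial discrepancy there, but this has to be controlled uniformly). Decompose $A/J(A)$ as a direct sum of $\mathbb{Z}/2\mathbb{Z}$-graded simple algebras; by the Giambruno--Zaicev characterization of the exponent of a finite dimensional graded algebra in terms of its Wedderburn components, $\exp_{\mathbb{Z}/2\mathbb{Z}}(A)=\dim(A_{i_{1}}\oplus\cdots\oplus A_{i_{r}})$ for a suitable tuple of graded simple blocks whose product modulo $J(A)$ is nonzero. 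Using the Bahturin--Sehgal--Zaicev description of graded simple algebras, I would lift homogeneous bases of these blocks, together with radical elements realizing the nonzero products between them, to elements of $A^{*}$ by tensoring each with a non-nilpotent homogeneous element of $E$ of the matching parity, and then feed them into an Amitsur--Capelli type alternating multilinear polynomial of degree $n$; the Grassmann factors prevent the alternation from collapsing, so the polynomial is not an identity of $A^{*}$. The usual counting argument then yields $c_{n}(A^{*})\ge\bigl(\dim(A_{i_{1}}\oplus\cdots\oplus A_{i_{r}})\bigr)^{n}/q(n)$ for some polynomial $q$, whence $\exp(A^{*})\ge\exp_{\mathbb{Z}/2\mathbb{Z}}(A)$. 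Combining the two inequalities and taking $n$-th roots --- both limits exist and are integers, the graded one because $\mathbb{Z}/2\mathbb{Z}$ is abelian (cf.\ the Remark above) --- proves $\exp(W)=\exp_{\mathbb{Z}/2\mathbb{Z}}(A)$.

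The hard part is this last lower bound. Unlike in the affine case, there is no structural formula to invoke for $\exp(A^{*})$ directly, because $A^{*}$ is infinite dimensional; the whole content is to transfer the finite dimensional graded structure of $A$ --- in particular the semisimple part that is sandwiched by the radical and that governs $\exp_{\mathbb{Z}/2\mathbb{Z}}(A)$ --- across the Grassmann envelope with no loss in the exponential base. Concretely, the sign bookkeeping $(-1)^{\beta_{\epsilon}(\sigma)}$ and the interaction between the $\mathbb{Z}/2\mathbb{Z}$-grading of $A$ and the natural grading of $E$ must be arranged so that the alternating polynomial witnessing the growth genuinely survives on $A^{*}$, and so that none of the relevant blocks $A_{i_{j}}$ is ``killed'' by the odd part of $E$. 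Once that is in place, the two inequalities close the argument.
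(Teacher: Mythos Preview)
The paper does not prove this theorem; it quotes it from the literature and, in the Remark immediately following, explains that the original formulation in \cite{gz2} computes $\exp(W)$ as the dimension of a certain semisimple subalgebra of $\overline{A}$, and that this dimension is then identified with $\exp_{\mathbb{Z}/2\mathbb{Z}}(A)$ in \cite{AGM}. So the ``paper's proof'' is: invoke \cite{gz2} for $\exp(W)=d$, invoke \cite{AGM} for $d=\exp_{\mathbb{Z}/2\mathbb{Z}}(A)$.

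Your route is not really different in substance. Your lower bound is precisely a sketch of the \cite{gz2} argument: you assume the structural formula $\exp_{\mathbb{Z}/2\mathbb{Z}}(A)=\dim(A_{i_1}\oplus\cdots\oplus A_{i_r})$ (this is the \cite{AGM} input) and then build Capelli-type non-identities on $A^{*}$ realizing growth $d^{n}/q(n)$. So you are reproducing one half of the cited results while quoting the other. What \emph{is} genuinely different, and worth keeping, is your upper bound: the Kemer dictionary gives $P_{n}\cap\id(A^{*})=\bigcap_{\epsilon}\phi_{\epsilon}^{-1}\bigl(P_{n,\epsilon}\cap\id_{\mathbb{Z}/2\mathbb{Z}}(A)\bigr)$, hence $c_{n}(A^{*})\le\sum_{\epsilon}c_{n,\epsilon}(A)=c_{n}^{\mathbb{Z}/2\mathbb{Z}}(A)$. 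This is correct and pleasantly elementary; it bypasses the structural upper bound of \cite{gz2} entirely.

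One slip to fix in the lower-bound sketch: there is no ``non-nilpotent homogeneous element of $E$ of the matching parity'' when the parity is odd --- every odd element of the Grassmann algebra squares to zero. What the construction actually needs is that the Grassmann factors attached to the various basis elements use \emph{distinct} generators $e_{i}$, so that the relevant products $u_{1}\cdots u_{n}$ are nonzero; nilpotence of the individual $u_{j}$ is irrelevant. With that correction your sketch is the standard argument, but be aware that carrying it out in full (controlling the radical bridges and the sign bookkeeping so that the alternation genuinely survives) is exactly the nontrivial content of \cite{gz2}; you have identified the right mechanism but not shortened the work.
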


\begin{remark} \label{formulation of Giambruno_Zaicev}
In fact, Theorem \ref{Giam-Zaicev} was stated in \cite{gz2} in a
different way. The result there says that $\exp(W)$ is equal to the
dimension of a certain semisimple subalgebra of $\overline{A}$,
where $\overline{A}=A\otimes_{L}\bar{L}$ and $\bar{L}$ is the
algebraic closure of $L$. Then one finds that this dimension is
actually equal to $\exp_{\mathbb{Z}/2\mathbb{Z}}(A)$ (see \cite
{AGM}).
\end{remark}

The other ``player'' in Conjecture \ref{Bahturin-Zaicev-Conjecture}
is of course $\exp(W_e)$. We could describe it in similar terms as
we did for $\exp(W)$, i.e. using a $\mathbb{Z}/2\mathbb{Z}$-graded
algebra $A'$ say, but by doing it we won't be able to compare
$\exp_{\mathbb{Z}/2\mathbb{Z}}(A)$ and
$\exp_{\mathbb{Z}/2\mathbb{Z}}(A')$. We need therefore a $G$-graded
representability theorem which will produce the
$\mathbb{Z}/2\mathbb{Z}$-graded algebras $A$ and $A'$
simultaneously. This is achieved by invoking the $G$-graded
representability theorem for arbitrary $G$-graded
\textit{PI}-algebras (see \cite{AB}).

\begin{theorem}[$G$-graded \textit{PI}-equivalence]\label{PI-equivalence-general}

Let $W$ be a \textit{PI} and $G$-graded algebra over $F$ where $G$ is any finite group. Then there
exists a field extension $L$ of $F$ and a finite dimensional
$\mathbb{Z}/2\mathbb{Z} \times G$-graded algebra $A$ over $L$ such
that $\id_{G}(W)=\id_{G}(A^{*})$ where $A^{*}$ is the Grassmann
envelope of $A$.
\end{theorem}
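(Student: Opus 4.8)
The statement is the $G$-graded analogue of Kemer's representability theorem recalled above, and the plan is to transport Kemer's machinery into the category of $H$-graded algebras, where $H$ will be the finite group $\mathbb{Z}/2\mathbb{Z}\times G$. I would split the argument into two stages. Stage one is the \emph{affine} case: every finitely generated $H$-graded \textit{PI}-algebra over a field of characteristic zero is $H$-graded \textit{PI}-equivalent to a finite-dimensional $H$-graded algebra (over a suitable field extension). Stage two reduces the general $G$-graded statement to the affine $\mathbb{Z}/2\mathbb{Z}\times G$-graded statement through Kemer's ``super-trick'' --- the correspondence between ordinary $T$-ideals and $\mathbb{Z}/2\mathbb{Z}$-graded $T$-ideals mediated by the Grassmann envelope --- which is exactly where the extra $\mathbb{Z}/2\mathbb{Z}$ factor enters.

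For stage one one rebuilds Kemer's two principal tools inside the graded world. The first is the \emph{Kemer index}: to the $T_H$-ideal $\Gamma=\id_H(W)$ one attaches a pair $(\alpha(\Gamma),\beta(\Gamma))$ recording, in the multilinear components, the largest admissible ``size'' of systems of alternating sets of homogeneous variables that survive modulo $\Gamma$, together with the associated \emph{Kemer polynomials}. The second is the representability step: combining Zubrilin--Razmyslov trace (Cayley--Hamilton) identities, the Shirshov height theorem, and Noetherianity of the relevant trace ring --- all performed through homogeneous components so that the grading is respected --- one shows that $\Gamma$ is the ideal of $H$-graded identities of a finite-dimensional $H$-graded algebra $A$; a field extension $L$ is needed here for the same reason as in Kemer's theorem, since the finite-dimensional model is most naturally produced after extending scalars. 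The substance is the bookkeeping: each inductive move --- controlling the nilpotent radical, passing to a ``Kemer-minimal'' $T_H$-ideal, replacing $W$ by a generic or relatively free model --- must preserve the $H$-grading, so the relevant combinatorics involves the $H$-colorings of the variables on top of the usual $S_n$-action.

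Granting stage one, the general statement follows Kemer's non-affine reduction, now in the $G$-graded setting. Given an arbitrary $G$-graded \textit{PI}-algebra $W$, one first produces a \emph{finitely generated} $\mathbb{Z}/2\mathbb{Z}\times G$-graded algebra $B$ with $\id_G(W)=\id_G(B^{*})$, where $B^{*}$ is the Grassmann envelope taken with respect to the $\mathbb{Z}/2\mathbb{Z}$-grading and carrying the induced $G$-grading; this is the $G$-graded form of Kemer's correspondence between $T$-ideals and $\mathbb{Z}/2\mathbb{Z}$-graded $T$-ideals, and it reuses the machinery of stage one. Applying stage one with $H=\mathbb{Z}/2\mathbb{Z}\times G$ to $B$ yields a finite-dimensional $\mathbb{Z}/2\mathbb{Z}\times G$-graded algebra $A$ over a field extension $L$ of $F$ with $\id_{\mathbb{Z}/2\mathbb{Z}\times G}(B)=\id_{\mathbb{Z}/2\mathbb{Z}\times G}(A)$. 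Since the Grassmann-envelope construction depends only on the $\mathbb{Z}/2\mathbb{Z}\times G$-graded identities, $\id_G(B^{*})=\id_G(A^{*})$, hence $\id_G(W)=\id_G(A^{*})$, which is the assertion; the details are carried out in \cite{AB}.

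The main obstacle lies wholly in stage one, and within it two points are the delicate ones. First, the Kemer index and the Kemer polynomials must retain the ``Phoenix-type'' stability property --- that every $T_H$-ideal sufficiently close to $\Gamma$ still supports the Kemer polynomials --- once the variables carry $H$-degrees, since this is precisely what powers the induction on $T_H$-ideals. Second, one must control the trace-ring/generic construction tightly enough that its output is genuinely $H$-graded and finite-dimensional after the field extension. For a non-abelian grading group $G$ there is the additional, essentially verificational, task of checking that none of these arguments secretly used commutativity of the group; they do not, but this is where most of the effort goes.
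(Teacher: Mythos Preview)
The paper does not prove this theorem at all; it is quoted as a black box from \cite{AB}, and your sketch is a faithful high-level outline of the argument carried out there (graded Kemer index and Kemer polynomials, Zubrilin--Razmyslov/Shirshov input for the affine case, then the Grassmann super-trick to reduce the non-affine $G$-graded problem to the affine $\mathbb{Z}/2\mathbb{Z}\times G$-graded one). Since you yourself defer the details to \cite{AB}, your proposal and the paper are in complete agreement: both treat the result as established in that reference.
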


Let us draw some consequences from Theorem
\ref{PI-equivalence-general} ($W$ and $A^{*}$ are as in the
theorem).

\begin{enumerate}

\item

$\id(W)=\id(A^{*})$ (as ungraded algebras). Indeed, by linearity, any ordinary polynomial identity
is equivalent to a set of $G$-graded identities.

\item
$\id(W_e)=\id((A^{*})_e)$

\item

Note that $A_{\mathbb{Z}/2\mathbb{Z} \times {e}}$, the
$\mathbb{Z}/2\mathbb{Z} \times {e}$-component of $A$, is a
$\mathbb{Z}/2\mathbb{Z}$-graded algebra and we may consider its Grassmann
envelope $(A_{\mathbb{Z}/2\mathbb{Z} \times {e}})^{*}$. It is
easily seen that $(A_{\mathbb{Z}/2\mathbb{Z} \times
{e}})^{*}=(A^{*})_e.$

\end{enumerate}

In view of Theorem \ref{PI-equivalence-general} and its consequences we conclude that
Conjecture \ref{Bahturin-Zaicev-Conjecture} will be proved if we prove the following statement.

\begin{theorem} \label{exp of graded algebra}

Let $A$ be a finite
dimensional $\mathbb{Z}/2\mathbb{Z} \times G$-graded algebra. Let
$A^{*}$ be its Grassmann envelope (with respect to the
$\mathbb{Z}/2\mathbb{Z}$-grading) and let $(A^{*})_e$ be its $e$-component. Then we have

$$
\exp(A^{*})\leq |G|^2 \exp((A_{\mathbb{Z}/2\mathbb{Z} \times
{e}})^{*}).
$$

\end{theorem}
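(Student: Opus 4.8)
The plan is to reduce the statement to a purely finite-dimensional, combinatorial comparison of semisimple parts via the Giambruno--Zaicev description of the exponent. First I would invoke Theorem~\ref{Giam-Zaicev} (and the formulation in Remark~\ref{formulation of Giambruno_Zaicev}) to rewrite both sides: $\exp(A^{*})=\exp_{\mathbb{Z}/2\mathbb{Z}}(A)$ equals the maximal dimension $\dim(B_1\oplus\cdots\oplus B_k)$ over all choices of distinct simple components $B_1,\dots,B_k$ of $\bar A^{\mathrm{ss}}$ (where $\bar A=A\otimes_L\bar L$) such that $B_1\,\bar J\,B_2\,\bar J\cdots\bar J\,B_k\neq 0$ for the Jacobson radical $\bar J$ of $\bar A$; and similarly $\exp((A_{\mathbb{Z}/2\mathbb{Z}\times e})^{*})$ is the analogous maximum computed \emph{inside} the $e$-component $C:=A_{\mathbb{Z}/2\mathbb{Z}\times e}$. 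So the task becomes: given an admissible chain $B_1\bar J B_2\cdots\bar J B_k\neq 0$ in $\bar A$, produce an admissible chain in $\bar C=C\otimes_L\bar L$ whose total dimension is at least $|G|^{-2}\sum\dim B_i$.

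The key structural input is the Bahturin--Sehgal--Zaicev classification of $G$-graded simple algebras: each simple component $B_i$ appearing above is, as a $G$-graded algebra, of the form $M_{n_i}(\bar L)\otimes \bar L^{\varepsilon}[H_i]$ for a subgroup $H_i\le G$ and a cocycle, with an explicit grading determined by a tuple $(g_1,\dots,g_{n_i})\in G^{n_i}$. The crucial elementary observation is that the identity-graded part $(B_i)_e$ of such a $G$-graded simple algebra is itself a direct sum of matrix algebras over (twisted) group algebras, and its dimension satisfies $\dim (B_i)_e\ge |G|^{-2}\dim B_i$: indeed the $e$-component of $M_{n_i}(\bar L)$ with the grading from $(g_1,\dots,g_{n_i})$ is block-diagonal with blocks indexed by the cosets/values taken, and a pigeonhole/convexity estimate on $\sum n_{i,g}^2$ versus $(\sum n_{i,g})^2=n_i^2$ loses at most a factor $|G|$; the twisted group algebra $\bar L^{\varepsilon}[H_i]$ contributes another factor of at most $|H_i|\le|G|$ when passing to its $e$-part (which is just $\bar L$). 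Combining the two gives the factor $|G|^2$.

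Next I would promote this componentwise estimate to chains. Given the admissible chain $B_1\bar J\cdots\bar J B_k\neq 0$, I pick, for each $i$, a simple summand $D_i$ of $(B_i)_e$ realizing the dimension bound $\dim D_i\ge|G|^{-2}\dim B_i$; the $D_i$ are simple components of $\bar C^{\mathrm{ss}}$ since $(\bar A)_{e}=\bar C$ (using consequence (3) in the excerpt, $(A^{*})_e=(A_{\mathbb{Z}/2\mathbb{Z}\times e})^{*}$, at the level of the underlying finite-dimensional algebras). It remains to check that the chain $D_1\bar J' D_2\cdots\bar J' D_k\neq 0$ is nonzero, where $\bar J'=\bar J\cap\bar C$ is the radical of $\bar C$; here one must choose the $D_i$ compatibly with a fixed nonzero product $b_1 r_1 b_2 r_2\cdots r_{k-1}b_k$ witnessing the original chain — by the grading, the $e$-homogeneous components of the $b_i$ and $r_i$ already multiply within $\bar C$, and projecting the nonzero element to its $e$-component (or a suitable homogeneous component that can be translated to $e$) keeps it nonzero and routes it through the $D_i$'s. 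This yields $\exp((A_{\mathbb{Z}/2\mathbb{Z}\times e})^{*})\ge\sum\dim D_i\ge|G|^{-2}\sum\dim B_i=|G|^{-2}\exp(A^{*})$, which is the claim.

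The main obstacle I anticipate is precisely this last compatibility step: ensuring that after replacing each simple component $B_i$ by an $e$-graded simple summand $D_i$ of its identity part, the radical-linked chain survives. Getting the homogeneous-component bookkeeping right — in particular verifying that one can simultaneously choose the $D_i$ so that the surviving product lands in $\bar C$ and not merely in $\bar A$, possibly after conjugating by a grading-group element to move an arbitrary homogeneous component into degree $e$ — is where the care is needed; the per-component dimension inequality $\dim(B_i)_e\ge|G|^{-2}\dim B_i$ itself is a short computation with the Bahturin--Sehgal--Zaicev normal form.
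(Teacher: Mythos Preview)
Your reduction via Theorem~\ref{Giam-Zaicev} and the Bahturin--Sehgal--Zaicev structure theorem matches the paper, but the compatibility step you yourself flag as ``the main obstacle'' is a genuine gap, not a bookkeeping issue. One cannot in general pick simple summands $D_i\subseteq(B_i)_e$ that simultaneously satisfy $\dim D_i\ge|G|^{-2}\dim B_i$ and $D_1\bar J'\cdots\bar J'D_k\neq 0$. Concretely, take $G=\mathbb{Z}/2\mathbb{Z}=\{e,g\}$, $B_1=M_3(F)$ with elementary grading $(e,g,g)$, $B_2=F$ trivially graded, and $J\cong F^{3}$ the column $(M_3,F)$-bimodule graded so that $\deg j_1=e$ and $\deg j_2=\deg j_3=g$. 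Then $B_1JB_2\neq 0$, so $\exp^{Conj}_G(A)=10$; but $(B_1)_e\cong Fe_{11}\oplus M_2(F)$ and $J_e=Fj_1$, and since $M_2(F)\cdot j_1=0$ the only nonzero $e$-chain through both components is forced to use $D_1=Fe_{11}$, giving total dimension $1+1=2<10/4$. Your proposed fixes do not rescue this: the witnessing product $e_{11}j_1 1$ is already homogeneous of degree $e$, and there is no grading-shift action on a general $G$-graded algebra that would move one homogeneous component to another.

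The paper's proof of the underlying inequality (Theorem~\ref{Main}) does not attempt to find a single compatible family $(D_i)$. It first enriches the witnessing product to a long monomial $\widehat{\Lambda}$ that passes through every basis element of every $B_i$ (Steps~1--4), and then, for \emph{each} left $K$-coset representative $g\in\Omega_0$ occurring as a partial degree of $\widehat{\Lambda}$, extracts a different decomposition $\widehat{\Lambda}=X_g\Sigma_{K,1}\cdots\Sigma_{K,d}Y(g)$ and hence a different nonzero chain in $A_K$. Summing the resulting inequalities over all $g\in\Omega_0$ and invoking the exact count in Corollary~\ref{number of visits} of how many $g$ visit each $K$-simple component reduces the problem to the elementary inequality $(\pi_1+\cdots+\pi_m)^2\le m\sum\pi_j^2$. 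In the example above the $e$-decomposition contributes the linked chain of total dimension $2$ while the $g$-decomposition contributes only the isolated $M_2(F)$ of dimension $4$; neither alone suffices, and it is precisely this averaging over $\Omega_0$ that makes the argument go through.
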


Invoking Theorem \ref{Giam-Zaicev} we see that Theorem \ref{exp of graded algebra} can be replaced by the
following theorem.

\begin{theorem} \label{Theorem after Giam_Zaicev}
With the above notation:

$$
\exp_{\mathbb{Z}/2\mathbb{Z}}(A) \leq |G|^2 \exp_{\mathbb{Z}/2\mathbb{Z}}(A_{\mathbb{Z}/2\mathbb{Z} \times {e}}).
$$

\end{theorem}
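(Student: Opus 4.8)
The plan is to reduce everything to the Giambruno–Zaicev description of the $\mathbb{Z}/2\mathbb{Z}$-graded exponent in terms of semisimple subalgebras, and then exploit the structure theory of $G$-graded (more precisely $\mathbb{Z}/2\mathbb{Z}\times G$-graded) simple algebras due to Bahturin–Sehgal–Zaicev. First I would pass to the algebraic closure: by Remark~\ref{formulation of Giambruno_Zaicev}, $\exp_{\mathbb{Z}/2\mathbb{Z}}(A)$ equals the maximal dimension over $\bar L$ of a semisimple subalgebra $\bar B = \bar B_1\oplus\cdots\oplus \bar B_q$ of $\bar A = A\otimes_L\bar L$ that is $\mathbb{Z}/2\mathbb{Z}\times G$-graded, each $\bar B_i$ is graded-simple, and such that there is a (graded) evaluation of some graded polynomial that is nonzero modulo the radical and ``connects'' the blocks $\bar B_i$ through the radical — i.e. $\bar B$ together with bridging elements of $J(\bar A)$ witnesses a nonvanishing multilinear graded identity of arbitrarily high degree. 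The analogous statement for $A_{\mathbb{Z}/2\mathbb{Z}\times e}$: $\exp_{\mathbb{Z}/2\mathbb{Z}}(A_{\mathbb{Z}/2\mathbb{Z}\times e})$ is the maximal dimension of such a good semisimple subalgebra sitting inside the $(\mathbb{Z}/2\mathbb{Z}\times e)$-component.

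The heart of the argument is the following local comparison. Let $\bar B = \bar B_1\oplus\cdots\oplus\bar B_q$ be an optimal good semisimple subalgebra computing $\exp_{\mathbb{Z}/2\mathbb{Z}}(A)$. By the Bahturin–Sehgal–Zaicev classification, each graded-simple block $\bar B_i$ (as a $\mathbb{Z}/2\mathbb{Z}\times G$-graded algebra over $\bar L$) is isomorphic to $C_i\otimes \bar L^{\sigma_i}[H_i]$-type object — concretely, $\bar B_i \cong M_{n_i}(\bar L)$ with an \emph{elementary} $\mathbb{Z}/2\mathbb{Z}\times G$-grading twisted by a $2$-cocycle on a subgroup, so that the identity component $(\bar B_i)_{\mathbb{Z}/2\mathbb{Z}\times e}$ is itself a direct sum of copies of matrix algebras (over $\bar L$ or over a twisted group algebra of a subgroup of $G$), and one has the dimension bound
\[
\dim_{\bar L}\bar B_i \;\le\; |G|\cdot \dim_{\bar L}(\bar B_i)_{\mathbb{Z}/2\mathbb{Z}\times e}\cdot(\text{number of $e$-blocks of }\bar B_i).
\]
More carefully: if the support of the grading on $\bar B_i$ has $e$-component of ``width'' $k_i$ (the number of indices whose $G$-degree is $e$-related), then $\dim_{\bar L}\bar B_i = n_i^2$ while the identity component has dimension at least $(n_i/|G|)^2\cdot$(something), and combining the per-block estimates will produce an overall factor at most $|G|^2$ when we also account for how many blocks $\bar B_i$ can be ``merged'' into a single good semisimple subalgebra of the identity component. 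The key point is that from the optimal $\bar B$ for $A$ one \emph{constructs} a good semisimple subalgebra $\bar B'$ of $A_{\mathbb{Z}/2\mathbb{Z}\times e}$: take $\bar B' = \bigoplus_i (\bar B_i)_{\mathbb{Z}/2\mathbb{Z}\times e}$ together with the bridging radical elements, but one must select, inside each $(\bar B_i)_{\mathbb{Z}/2\mathbb{Z}\times e}$, a single simple component so that the product over the chain still survives, and then check $\dim\bar B' \ge |G|^{-2}\dim\bar B$.

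The main obstacle — and the step I would spend the most care on — is verifying that the constructed $\bar B'$ is genuinely a \emph{good} semisimple subalgebra for $(A^*)_e$, i.e. that the nonvanishing multilinear alternating evaluation witnessing $\exp_{\mathbb{Z}/2\mathbb{Z}}(A)$ can be converted into one witnessing $\exp_{\mathbb{Z}/2\mathbb{Z}}(A_{\mathbb{Z}/2\mathbb{Z}\times e})$ of comparable degree. Concretely, a nonzero evaluation on $\bar B$ threads through the blocks via radical elements $w_1\in J$ with $\bar B_i w_1 \bar B_{i+1}\neq 0$; after restricting to identity components the radical elements $w_1$ need no longer lie in the identity component, so one replaces each $w_1$ by a product of $|G|$ of its graded homogeneous pieces (one for each coset needed to return to degree $e$), or equivalently inserts auxiliary generators of appropriate $G$-degree. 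This is exactly the mechanism that produces the factor $|G|^2$ rather than $|G|$: one factor of $|G|$ comes from shrinking each graded-simple block to its identity component, and a second factor of $|G|$ comes from the cost of re-routing the connecting radical elements through $G$. Carrying this out rigorously requires the precise form of the Bahturin–Sehgal–Zaicev structure theorem (including the description of the identity component of a graded-simple algebra) and a careful bookkeeping of supports; this is the same strategy as in the affine case proved in \cite{Al}, and the new content is purely that the representability input (Theorem~\ref{PI-equivalence-general}) already delivered a finite dimensional $A$, so no further approximation by affine subalgebras is needed and the affine argument applies verbatim to $A$ and $A_{\mathbb{Z}/2\mathbb{Z}\times e}$.
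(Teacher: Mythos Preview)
Your proposal names the right ingredients (Bahturin--Sehgal--Zaicev, Giambruno--Zaicev, passage to the algebraic closure), but the last sentence is a genuine gap, and the comparison sketch in the middle is too loose to work as stated. The claim that ``the affine argument applies verbatim to $A$ and $A_{\mathbb{Z}/2\mathbb{Z}\times e}$'' is incorrect. The affine result of \cite{Al} is exactly the case $K=\{e\}$ of Theorem~\ref{Main}, i.e.\ $\exp^{Conj}_{G}(A)\le|G|^{2}\exp^{Conj}_{\{e\}}(A_{e})$. Here, however, the ambient group is $\mathbb{Z}/2\mathbb{Z}\times G$ and the relevant subgroup is $K=\mathbb{Z}/2\mathbb{Z}\times\{e\}$, which is \emph{not} trivial. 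Applying the $K=\{e\}$ case to the group $\mathbb{Z}/2\mathbb{Z}\times G$ would produce the wrong constant $4|G|^{2}$ and would land in $A_{(e,e)}$ rather than in $A_{\mathbb{Z}/2\mathbb{Z}\times\{e\}}$. The paper stresses precisely this point in the introduction: the non-affine reduction \emph{forces} the strictly more general inequality $\exp^{Conj}_{G}(A)\le[G:K]^{2}\exp^{Conj}_{K}(A_{K})$ for an arbitrary subgroup $K$, and proving that generalization is the new content.

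Your heuristic for the two factors of $|G|$ (``one from shrinking each block to its identity component, one from re-routing the radical elements'') does not match what is actually needed, and the implied per-block bound can fail: a $K$-simple component of a $G$-simple block $(S_{G})_{t}\cong F^{f}H\otimes M_{r}(F)$ has dimension $|g_{s_i}^{-1}Hg_{s_i}\cap K|\,\pi_{i}^{2}$, and there is no reason any single chosen $\pi_{i}$ satisfies $\pi_{i}^{2}\ge r^{2}/[G:K]$. The paper does \emph{not} construct one good $K$-subalgebra; it runs an averaging argument. One first manufactures a rich nonzero monomial $\widehat{\Lambda}$ realizing $\exp^{Conj}_{G}(A)$, and then, for each $g$ in a set $\Omega_{0}$ of left $K$-coset representatives appearing as prefix-degrees of $\widehat{\Lambda}$, one obtains a different decomposition of $\widehat{\Lambda}$ into $K$-monomials, hence a different candidate $K$-semisimple subalgebra. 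A double-coset count (each $K$-simple class $[i]$ in $(S_{G})_{t}$ is visited exactly $|Hg_{s_i}K|/|K|$ times over all $g\in\Omega_{0}$; this is Corollary~\ref{number of visits}), combined with $(|Hg_{s_i}K|/|K|)\cdot|g_{s_i}^{-1}Hg_{s_i}\cap K|=|H|$, reduces the summed inequality to $(\pi_{1}+\cdots+\pi_{m})^{2}\le m\sum_{j}\pi_{j}^{2}$ with $m\le[G:K]$, which is Cauchy--Schwarz. The radical bridges $v_{t}$ are never altered; what varies with $g$ is which $K$-simple component of each $G$-simple block is hit.
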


As mentioned in Remark \ref{formulation of Giambruno_Zaicev}, if
$F$ is algebraically closed, then the
$\mathbb{Z}/2\mathbb{Z}$-exponent of a finite dimensional
$\mathbb{Z}/2\mathbb{Z}$-graded algebra is determined in terms of
the dimension of a certain semisimple subalgebra of $A$. More
generally, this is known to be true for finite dimensional
$G$-graded algebras where $G$ is a finite abelian group. In
\cite{Al} it was conjectured that this is the case when $G$ is any
finite group. For the reader convenience we recall here the
precise construction in the general case, namely where $G$ is any
finite group.

Let $A$ be a finite dimensional algebra over an algebraically closed
field $F$ of characteristic zero. Suppose $A$ is $G$-graded where
$G$ is a finite group. It is well known (see~ \cite{CM}) that the
radical $J$ of $A$ is $G$-graded as well. Moreover there exists a
$G$-graded semisimple subalgebra $S_{G}$ of $A$ which supplements
$J$ (in $A$) as vector spaces (see~ \cite{StVan}). Let
$S_{G}=(S_{G})_1\oplus \ldots\oplus (S_{G})_q$ be the decomposition
of $S_{G}$ into its $G$-simple components and consider
\textit{nonzero} products in $A$ of the form
$(S_{G})_{i_1}J(S_{G})_{i_2}\cdots J(S_{G})_{i_k}$. For any product
of that form we consider the sum of the dimensions (as $F$-vector
spaces) of the \textit{different} $G$-simple components that appear
in the product. It was conjectured in \cite{Al} that the exponent of
the algebra $A$ as a $G$-graded algebra, is equal to the maximum of
these sums. Let us denote this maximum by $\exp_{G}^{Conj}(A)$.

The following lemma is clear.

\begin{lemma}\label{monotonicity} Let $G$ be a finite group
and $H$ a normal subgroup. Let $A$ be a finite dimensional algebra
over an algebraically closed field $F$ of characteristic zero graded
by $G$. Then $A$ is graded by $G/H$ in a natural way and we have
$\exp_{G}^{Conj}(A) \geq \exp_{G/H}^{Conj}(A)$.
\end{lemma}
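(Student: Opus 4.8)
The plan is to unwind both definitions of $\exp^{Conj}$ and exhibit, for any witnessing product over the coarser group $G/H$, a corresponding product over $G$ with a sum of component-dimensions at least as large. First I would recall that the $G$-grading $A=\bigoplus_{g\in G}A_g$ induces the $G/H$-grading $A=\bigoplus_{\bar g\in G/H}A_{\bar g}$ with $A_{\bar g}=\bigoplus_{g\in \bar g}A_g$, and that the radical $J$ is homogeneous for both gradings. The key observation is that a $G$-graded semisimple supplement $S_G$ to $J$ is in particular a $G/H$-graded semisimple subalgebra supplementing $J$, so we may use $S_{G/H}:=S_G$ as the chosen supplement in the computation of $\exp_{G/H}^{Conj}(A)$. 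This is legitimate because the quantity $\exp_{G/H}^{Conj}(A)$ does not depend on the choice of semisimple supplement (this independence is implicit in the conjectural description and in any case the inequality we want only needs one convenient choice on the $G/H$ side to dominate).

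Next I would compare $G$-simple and $G/H$-simple components. Decompose $S_G=(S_G)_1\oplus\cdots\oplus(S_G)_q$ into $G$-simple components; grouping these according to how they assemble under the coarser grading, each $G/H$-simple component of $S_G$ (viewed with the $G/H$-grading) is a direct sum of one or more of the $(S_G)_i$. Crucially, as $F$-vector spaces the dimension of a $G/H$-simple component equals the sum of the dimensions of the $G$-simple components it contains. Now take a nonzero product $(S_{G/H})_{j_1}J(S_{G/H})_{j_2}\cdots J(S_{G/H})_{j_k}$ realizing $\exp_{G/H}^{Conj}(A)$. Since each $(S_{G/H})_{j_\ell}$ is a sum of $G$-simple components $(S_G)_i$ and $J$ is the same ideal, by distributivity at least one choice of $G$-simple summand from each factor yields a nonzero product $(S_G)_{i_1}J(S_G)_{i_2}\cdots J(S_G)_{i_k}$ of the same length.

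Finally I would compare the two sums. On the $G/H$ side the relevant quantity is the sum of dimensions of the \emph{distinct} $G/H$-simple components appearing among $(S_{G/H})_{j_1},\dots,(S_{G/H})_{j_k}$; on the $G$ side it is the sum of dimensions of the distinct $G$-simple components appearing among $(S_G)_{i_1},\dots,(S_G)_{i_k}$. The subtlety — and the step I expect to need the most care — is that when we select a $G$-simple summand from each $(S_{G/H})_{j_\ell}$ to get a nonzero product, we might accidentally repeat and thereby lose dimension; to avoid this I would argue that one can in fact use all $G$-simple summands: for a fixed $G/H$-simple $(S_{G/H})_{j_\ell}$ appearing in the $G/H$-product, if the product $\cdots J(S_{G/H})_{j_\ell} J\cdots$ is nonzero then (because $(S_{G/H})_{j_\ell}=\bigoplus_i (S_G)_i$ and multiplication is bilinear) there is at least one summand making it nonzero, and running over all factors and collecting, the distinct $G$-simple components so obtained have total dimension at most $\sum_\ell \dim (S_G)_{i_\ell}\le$ the distinct-component sum we want; but more simply, since every $G/H$-simple component that contributes on the left decomposes into $G$-simple components whose dimensions sum to $\dim (S_{G/H})_{j_\ell}$, and each such $G$-simple component does appear in \emph{some} nonzero $G$-product of length $k$ (possibly after choosing summands in the other factors), a maximum over all $G$-products is at least the $G/H$-sum. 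Hence $\exp_G^{Conj}(A)\ge \exp_{G/H}^{Conj}(A)$, which is the assertion.
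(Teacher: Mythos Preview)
The paper offers no proof of this lemma (it is declared ``clear''), so there is nothing to compare against on that side; the question is only whether your argument stands.

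Your overall strategy---use the same semisimple supplement $S_G$ for both gradings and compare products---is correct and is indeed why the lemma is immediate. However, you have the decomposition running in the wrong direction. You assert that each $G/H$-simple component of $S_G$ is a direct sum of several $G$-simple components $(S_G)_i$. In fact the opposite holds: the identity element of each $G$-simple component $(S_G)_i$ is homogeneous of degree $e\in G$ (Theorem~\ref{Bahturin Zaicev Sehgal}), hence also of degree $\bar e\in G/H$; these central idempotents then split any $G/H$-graded ideal of $S_G$, so every $G/H$-simple component lies \emph{inside} a single $(S_G)_i$. A concrete example: with $G=\mathbb{Z}/2\mathbb{Z}$ and $H=G$, the group algebra $F[G]$ with its natural grading is $G$-simple but, viewed with the trivial $G/H$-grading, decomposes as $F\times F$.

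Once the direction is corrected the ``subtlety'' you flag disappears and the argument is one line. Given a nonzero product $(S_{G/H})_{j_1}J\cdots J(S_{G/H})_{j_k}$ realizing $\exp_{G/H}^{Conj}(A)$, each $(S_{G/H})_{j_\ell}$ sits inside a unique $(S_G)_{i_\ell}$, so $(S_G)_{i_1}J\cdots J(S_G)_{i_k}$ contains the original product and is nonzero. The distinct $G/H$-simple components among the $(S_{G/H})_{j_\ell}$ are disjoint subspaces of the distinct $G$-simple components among the $(S_G)_{i_\ell}$, whence the sum of their dimensions is bounded above by $\sum \dim_F (S_G)_{i_\ell}\le \exp_G^{Conj}(A)$.
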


Let us pause for a moment and explain what we have so far.
Bahturin-Zaicev conjecture (Conjecture \ref{Bahturin-Zaicev-Conjecture})
for an algebra $W$ over a field $F$ is
given in terms of the growth of the codimensions of the
ideals of identities of $W$ and $W_{e}$. Applying representability for
$G$-graded algebras, namely Theorem \ref{PI-equivalence-general},
the conjecture is translated with the same terms (namely growth of
codimensions) into finite dimensional algebras. Giambruno-Zaicev
result translates the problem in terms of the \textit{structure} of
the finite dimensional algebras but for this the field $F$ is
required to be algebraically closed. In order to fill this ``gap" we
need to pass to the algebraic closure $\bar{F}$ of $F$. Indeed, it
is well known that the ideal of identities (defined over $F$) of $W$
and $W\otimes_{F}L$ coincide where $L$ is any field extension of $F$
and in particular $\bar{F}$. Of course, the algebra $W\otimes_{F}L$
is an $L$-algebra and we may consider its identities over $L$. The
point is that since the field is infinite, these are linear
combinations with coefficient in $L$ of $F$-identities and hence by
changing the ground field from $F$ to $L$ we do not change the
codimensions.

It follows from Lemma \ref{monotonicity} and the last paragraph
that Theorem \ref{Theorem after Giam_Zaicev} (and hence Conjecture
\ref{Bahturin-Zaicev-Conjecture}) will be proved if we show the
following theorem.

\begin{theorem}
Let $A$ be a $\mathbb{Z}/2\mathbb{Z}\times G$-graded finite
dimensional over an algebraically closed field $F$ of
characteristic zero. Then
$$
\exp^{Conj}_{\mathbb{Z}/2\mathbb{Z}\times G}(A) \leq |G|^2 \exp^{Conj}_{\mathbb{Z}/2\mathbb{Z}}(A_{\mathbb{Z}/2\mathbb{Z} \times {e}})
$$

\end{theorem}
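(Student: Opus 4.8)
The plan is to reduce the statement to a purely combinatorial comparison between products of $G$-simple components of $A$ and products of $G$-simple components of the identity component $A_{\mathbb{Z}/2\mathbb{Z}\times e}$. Write $J$ for the (graded) radical of $A$ and fix a $\mathbb{Z}/2\mathbb{Z}\times G$-graded semisimple complement $S$ with $G$-simple decomposition $S=S_1\oplus\cdots\oplus S_q$. The quantity $\exp^{Conj}_{\mathbb{Z}/2\mathbb{Z}\times G}(A)$ is realized by some nonzero product $S_{i_1}JS_{i_2}J\cdots JS_{i_k}$; let $T=\{S_{i_1},\dots,S_{i_k}\}$ be the set of distinct components occurring, so that $\exp^{Conj}_{\mathbb{Z}/2\mathbb{Z}\times G}(A)=\sum_{S_j\in T}\dim_F S_j$. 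The first step is to analyze each $G$-simple block individually.

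The key structural input is the Bahturin–Sehgal–Zaicev description: a finite dimensional $G$-simple algebra over an algebraically closed field of characteristic zero has the form $\Mat_r(F)\otimes F^\sigma[H]$ for a subgroup $H\le G$ and a cocycle $\sigma$, with a grading built from a function $G\to\{1,\dots,r\}^{?}$ together with the $H$-grading on the twisted group algebra; its $e$-component is then, up to the appropriate $\mathbb{Z}/2\mathbb{Z}$-bookkeeping, a matrix algebra of size $r/|H|$ times the $e$-part, and one computes directly that $\dim S_j\le |H|\cdot(\text{something})$. Concretely I expect to prove, for each $G$-simple component $S_j$ of $A$, an inequality of the form
$$
\dim_F S_j \le |G|\cdot \dim_F\bigl((S_j)_{\mathbb{Z}/2\mathbb{Z}\times e}\bigr)\cdot(\text{a bounded multiplicative factor}),
$$
and more importantly that $(S_j)_{\mathbb{Z}/2\mathbb{Z}\times e}$ sits inside $A_{\mathbb{Z}/2\mathbb{Z}\times e}$ as a subalgebra whose semisimple part contributes genuinely to $\exp^{Conj}_{\mathbb{Z}/2\mathbb{Z}}$. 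The second step is to run the product $S_{i_1}JS_{i_2}J\cdots JS_{i_k}$ through the $e$-component: since $J$ is graded, $J_e=J\cap A_{\mathbb{Z}/2\mathbb{Z}\times e}$ is the radical of $A_{\mathbb{Z}/2\mathbb{Z}\times e}$, and a nonzero product of $G$-simple components connected by $J$ yields, after intersecting with the $e$-component and possibly conjugating/relabeling, a chain of $e$-simple pieces connected by $J_e$ inside $A_{\mathbb{Z}/2\mathbb{Z}\times e}$. This is where the two factors of $|G|$ enter: one factor accounts for the ratio $\dim S_j / \dim(S_j)_e$ for each block, and the second factor accounts for the fact that distinct $G$-simple components $S_j,S_{j'}$ appearing in the $G$-graded product may, after passing to $e$-components, collapse to the \emph{same} or to \emph{fewer} $e$-simple blocks — so to recover all of $\sum_{S_j\in T}\dim S_j$ from the $e$-side one overcounts by at most $|G|$.

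The main obstacle I expect is precisely controlling this second mechanism: relating a nonzero $G$-graded path $S_{i_1}J\cdots JS_{i_k}$ to a nonzero path of $e$-simple components in $A_{\mathbb{Z}/2\mathbb{Z}\times e}$ whose distinct-dimension sum is at least $|G|^{-2}\sum_{S_j\in T}\dim S_j$. Nonvanishing of the $G$-graded product does not immediately give nonvanishing of any single homogeneous component, let alone the $e$-component; one must choose, within each edge $J$ and each vertex $S_j$, homogeneous elements whose product is nonzero and then multiply by suitable homogeneous units to push the degree back to $e$. The existence of enough units is where the structure theorem for $G$-simple algebras (the twisted group algebra factor $F^\sigma[H]$ supplies invertible homogeneous elements of each degree in $H$) does the work, and combining these local unit corrections along the whole chain, while keeping track of how many distinct $G$-simple components survive the projection to $A_{\mathbb{Z}/2\mathbb{Z}\times e}$, is the technical heart of the argument. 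Once the combinatorial inequality between the two ``$\exp^{Conj}$'' quantities is established this way, the theorem follows, and with it — via Lemma~\ref{monotonicity}, Theorem~\ref{Theorem after Giam_Zaicev}, Theorem~\ref{exp of graded algebra}, and the representability reductions — Conjecture~\ref{Bahturin-Zaicev-Conjecture}.
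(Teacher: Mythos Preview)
Your overall strategy---use the Bahturin--Sehgal--Zaicev structure theorem on each $G$-simple block, then manufacture from a nonzero $G$-graded product $S_{i_1}J\cdots JS_{i_k}$ a nonzero $K$-graded product inside $A_K$ (here $K=\mathbb{Z}/2\mathbb{Z}\times\{e\}$) whose contribution to $\exp^{Conj}_K(A_K)$ is large enough---is exactly the paper's strategy. But your accounting for the two factors of $|G|$ is not right, and the mechanism you sketch will not close the argument.

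First, your description of the second factor is confused: distinct $G$-simple components $S_j$, $S_{j'}$ cannot ``collapse to the same $e$-simple block''---they live in different direct summands of $S$. What actually happens is the opposite phenomenon: a single $G$-simple $S_j\cong F^fH\otimes M_r(F)$ breaks, when restricted to $K$, into \emph{several} $K$-simple pieces indexed by the $H$--$K$ double cosets represented in the elementary tuple $(g_{s_1},\dots,g_{s_r})$, and any single $K$-graded path in $A_K$ can touch at most one of these pieces per $G$-simple component. So the defect to be controlled is not overcounting but \emph{undercounting}: one $K$-path sees only a fraction of $\dim_F S_j$.

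The paper's remedy---which your proposal is missing---is not to build one $K$-path but a \emph{family} of them, indexed by a set $\Omega_0$ of left $K$-coset representatives in $G$ (so $|\Omega_0|\le[G:K]$). This requires first \emph{enriching} the realizing monomial by inserting, inside each $G$-simple block, a product that runs through every primitive idempotent $1\otimes e_{i,i}$ and every $u_h\otimes e_{i,i}$; only after this enrichment can one, for each $g\in\Omega_0$, chop the monomial into consecutive $K$-homogeneous pieces and obtain a nonzero $K$-path. A double-coset counting argument then shows that each $K$-simple piece of $S_j$ is visited by exactly $|Hg_{s_i}K|/|K|$ of these paths, and the estimate closes via the elementary inequality $(\pi_1+\cdots+\pi_m)^2\le m\sum\pi_i^2$ together with $m\le[G:K]$. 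Your idea of ``multiplying by suitable homogeneous units to push the degree back to $e$'' does not by itself produce this family of paths, nor the double-coset count, nor the Cauchy--Schwarz step; those are the missing pieces you need to supply.
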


Note that since the group $\mathbb{Z}/2\mathbb{Z}$ is abelian, by
the paragraph following Theorem \ref{Theorem after Giam_Zaicev}, the
word ``Conj" could be omitted from the right hand expression.

Our aim is to prove a substantially generalization
of the last statement.

Let $A$ be a finite dimensional algebra over an algebraically closed
field $F$ of characteristic zero. Let $G$ be a finite group and
assume $A$ is $G$-graded. Let $K$ be any subgroup of $G$ and let
$A_{K}$ be the subalgebra of $A$ which corresponds to $K$ (viewed as
a $K$-graded algebra). The following theorem is the main result of
this paper.

\begin{theorem}\label{Main} Let $A$, $G$ and $K$ as above. Then

$$
\exp^{Conj}_{G}(A) \leq [G:K]^2 \exp^{Conj}_{K}(A_{K}).
$$
\end{theorem}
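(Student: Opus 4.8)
The plan is to unwind the definition of $\exp^{Conj}_G(A)$ in terms of an optimal ``path'' $(S_G)_{i_1}J(S_G)_{i_2}\cdots J(S_G)_{i_k}$ through the $G$-simple components of a Wedderburn--Malcev complement $S_G$ of the radical $J$, and then to \emph{refine} this $G$-grading to a $K$-grading and track what happens to each ingredient. So first I would fix a $G$-graded semisimple complement $S_G = (S_G)_1 \oplus \cdots \oplus (S_G)_q$ and a nonzero product $P = (S_G)_{i_1} J (S_G)_{i_2} J \cdots J (S_G)_{i_k}$ achieving the maximum, so that $\exp^{Conj}_G(A)$ equals the sum of the dimensions of the distinct $G$-simple components $(S_G)_{i_1},\dots,(S_G)_{i_k}$ appearing in it. The key structural input is the Bahturin--Sehgal--Zaicev description of a $G$-simple algebra $B$: it is isomorphic to $\Mat_\ell(F)$ twisted by a cocycle over a subgroup $H \le G$, with $\dim_F B = \ell^2 |H|$, and when we restrict the grading to $K$, the algebra $B$ decomposes as a $K$-graded algebra into $[G:K]$-bounded pieces—more precisely, the components of $S_G$ may split, and one must bound how much dimension can be ``lost'' when passing to a $K$-simple refinement.

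The second step is to produce, from the $G$-graded data, a valid $K$-graded configuration computing (a lower bound for) $\exp^{Conj}_K(A_K)$. Here $A_K$ is the sum of the homogeneous components of $A$ indexed by $K$; its radical is $J \cap A_K = J_K$, and one chooses a $K$-graded semisimple complement $S_K$ inside $A_K$. The heart of the matter is a \emph{dimension-counting lemma}: for each $G$-simple component $(S_G)_i$ that appears in the optimal product $P$, I want to locate a $K$-simple component of $S_K$ sitting ``inside'' (or suitably related to) $(S_G)_i \cap A_K$ whose dimension is at least $\frac{1}{[G:K]^2}\dim_F (S_G)_i$. This is exactly where the factor $[G:K]^2$ must come from, and I expect the bound to emerge from the two independent sources of shrinkage in the twisted-matrix picture: restricting the ``support subgroup'' $H$ to $H \cap K$ costs a factor $[H:H\cap K] \le [G:K]$ in the group direction, and the grading on $\Mat_\ell(F)$ (given by an elementary grading via a function $G^\ell \to G$) restricts so that the surviving $K$-graded simple block has matrix size at least $\ell/[G:K]$, costing another factor of $[G:K]$ in each matrix dimension, hence $[G:K]^2$ overall on the dimension.

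The third step is to splice these local choices into a single nonzero $K$-graded product: one must check that the chain $(S_G)_{i_1} J (S_G)_{i_2} \cdots J (S_G)_{i_k}$ being nonzero in $A$ forces a corresponding chain $T_{j_1} J_K T_{j_2} J_K \cdots J_K T_{j_k}$ to be nonzero in $A_K$, where $T_{j_m}$ is the $K$-simple component selected from $(S_G)_{i_m}$. Nonvanishing should follow by choosing the $T_{j_m}$ compatibly with fixed nonzero matrix units witnessing $P \ne 0$, together with the fact that the Wedderburn--Malcev complements can be conjugated to be ``aligned.'' Then $\exp^{Conj}_K(A_K) \ge \sum_m \dim_F T_{j_m} \ge \frac{1}{[G:K]^2}\sum_m \dim_F (S_G)_{i_m} = \frac{1}{[G:K]^2}\exp^{Conj}_G(A)$, where in the middle sum we take distinct components—here one must be slightly careful that distinctness is preserved or else only helps, and since removing repeated summands only decreases the sum, this is harmless. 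Rearranging gives the claimed inequality.

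The main obstacle I anticipate is the dimension-counting lemma in the second step: making precise, for a single $G$-simple algebra equipped with the Bahturin--Sehgal--Zaicev normal form, that its restriction to $K$ contains a $K$-simple summand of dimension at least $[G:K]^{-2}$ times the original, and simultaneously that this summand can be chosen to respect a prescribed nonzero matrix unit so that the global nonvanishing argument in step three goes through. Handling the interaction between the cocycle twist on the subgroup $H$ and the elementary grading on the matrix part—in particular the possibility that different cosets of $H\cap K$ in $H$ lead to isomorphic but distinct $K$-graded pieces—is the delicate point, and I would expect the bulk of the paper's section~2 to be devoted to exactly this analysis.
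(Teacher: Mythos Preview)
Your overall architecture---locate a large $K$-simple piece inside each $G$-simple component and then splice---is the natural first idea, and your local dimension lemma in step~2 is in fact correct: one can show $\max_i |g_{s_i}^{-1}Hg_{s_i}\cap K|\,\pi_i^{2} \ge [G:K]^{-2}|H|r^{2}$ from Cauchy--Schwarz together with $\sum_i |Hg_{s_i}K| \le |G|$. The fatal gap is step~3. The assertion that nonvanishing of $(S_G)_{i_1}J(S_G)_{i_2}\cdots J(S_G)_{i_k}$ forces nonvanishing of some $T_{j_1}J_K T_{j_2}\cdots J_K T_{j_k}$ is false already in the smallest example. Take $G=\mathbb{Z}/2\mathbb{Z}$, $K=\{0\}$, and $A = Fe_1 \oplus Fe_2 \oplus Fv$ with $e_1,e_2$ orthogonal idempotents of degree~$0$, $v$ of degree~$1$, $e_1v = ve_2 = v$, $v^2=0$. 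Then $(S_G)_1 J (S_G)_2 \ne 0$, so $\exp^{Conj}_G(A)=2$, but $J_K = 0$, hence $T_1 J_K T_2 = 0$ for \emph{every} choice of $K$-simple pieces; there is no nonzero $K$-chain of length two at all. The inequality $2 \le 4\cdot 1$ does hold here, but not by your mechanism. The radical elements connecting distinct $G$-simple components can have degrees outside $K$, and no amount of ``aligning complements'' manufactures $K$-radical elements that are not there.

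The paper avoids this by replacing a single splice with an \emph{averaging} over a family of $K$-chains. After enriching the witnessing monomial to a monomial $\widehat\Lambda$ that passes through every basis element of each $G$-simple component, one records, for each left $K$-coset represented by an initial segment of $\widehat\Lambda$, a decomposition $\widehat\Lambda = X_g \Sigma_{K,1}\cdots\Sigma_{K,d}Y(g)$ into $K$-monomials. Each such $g$ selects at most one $K$-simple block $B_{g,K,t}$ inside each $(S_G)_t$, and some $(S_G)_t$ may be skipped entirely. The inequality $\exp^{Conj}_K(A_K)\ge \sum_t \dim B_{g,K,t}$ holds for every $g$; summing over the at most $[G:K]$ values of $g$ and interchanging the sums, a counting lemma shows each $K$-simple block is hit by exactly $|Hg_{s_i}K|/|K|$ values of $g$, and the identity $(|Hg_{s_i}K|/|K|)\cdot |g_{s_i}^{-1}Hg_{s_i}\cap K| = |H|$ plus Cauchy--Schwarz on the $\pi_i$'s finishes. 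The two factors of $[G:K]$ come from two genuinely different places---one from the size of the averaging set, one from the number of double cosets in Cauchy--Schwarz---and your proposal tries to extract both from a single $K$-chain, which the example shows is impossible.
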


Note that the case where $K=\{e\}$ was proved in \cite{Al}.

\end{section}

\begin{section}{Proof of Theorem \ref{Main}}

As in \cite{Al}, the key here is a result of Bahturin, Sehgal and
Zaicev in which they fully describe the structure of finite
dimensional $G$-graded simple algebras over an algebraically closed
field $F$ of characteristic zero.

Let us recall their theorem.

\begin{theorem}[\cite{BSZ}]\label{Bahturin Zaicev Sehgal}

Let $B$ be a $G$-simple algebra. Then there exists a subgroup $H$ of
$G$, a $2$-cocycle $f:H\times H\longrightarrow F^{*}$ where the
action of $H$ on $F$ is trivial, an integer $r$ and an $r$-tuple
$(g_{s_1},\ldots,g_{s_r})\in G^{(r)}$ such that $B$ is
$G$-graded isomorphic to $C=F^{f}H\otimes M_{r}(F)$, where $C_g=
\span_F\{u_h \otimes e_{i,j}: g=g_{s_i}^{-1}hg_{s_j}\}$. Here
$F^{f}H= \sum_{h\in H}Fu_{g}$ is the twisted group algebra of $H$
over $F$ with the $2$-cocycle $f$ and $e_{i,j}\in M_{r}(F)$ is the
$(i,j)$-elementary matrix.

In particular the idempotents $1 \otimes e_{i,i}$ as well as the
identity of $B$ are homogeneous of degree $e\in G$.
\end{theorem}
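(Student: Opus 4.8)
The plan is to reconstruct $B$ from its identity component $B_e$ together with the homogeneous elements that glue the primitive idempotents of $B_e$, and then to read off the three pieces of data $H$, $f$ and $(g_{s_1},\dots,g_{s_r})$. First I would record that $B$ is unital with $1\in B_e$: writing $1=\sum_g 1_g$ and comparing degrees in $1\cdot x=x=x\cdot 1$ for homogeneous $x$ forces $1_e$ to be a two–sided unit, so $1=1_e\in B_e$ and $B_e$ is a unital subalgebra. Since the ordinary radical $J(B)$ is a graded ideal (\cite{CM}) it is either $0$ or $B$; being proper it is $0$, so $B$ is semisimple. Consequently the regular trace form $t(x,y)=\tr(L_{xy})$ is nondegenerate on $B$, and it is ``graded orthogonal'': $t(B_g,B_h)=0$ unless $gh=e$, because left multiplication by a homogeneous element of nonzero degree permutes the graded pieces with no fixed block and hence has trace $0$. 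In particular $t$ restricts to a nondegenerate form on $B_e$ and on each corner considered below.

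The crucial structural step is that $B_e$ is semisimple, which I would deduce from the trace form: if $x\in J(B_e)$ then $x$ is nilpotent and $xy\in J(B_e)$ for every $y\in B_e$, so $L_{xy}$ is nilpotent and $t(x,y)=\tr(L_{xy})=0$; nondegeneracy of $t|_{B_e}$ then gives $J(B_e)=0$. Hence $B_e\cong\bigoplus_k M_{d_k}(F)$, and I fix a complete system $\eps_1,\dots,\eps_r$ of orthogonal primitive idempotents of $B_e$, all homogeneous of degree $e$, so that $\eps_iB_e\eps_i=F\eps_i$. I then examine the diagonal corners $D_i=\eps_iB\eps_i$, graded algebras with $(D_i)_e=F\eps_i$. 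Each $D_i$ is a graded division algebra: for nonzero homogeneous $x\in(D_i)_g$, nondegeneracy of $t$ on $D_i$ supplies homogeneous $y\in(D_i)_{g^{-1}}$ with $t(x,y)\neq0$, whence $xy\in(D_i)_e=F\eps_i$ is a nonzero multiple of $\eps_i$ and $x$ is invertible. Therefore $H_i:=\supp D_i$ is a subgroup of $G$, every $(D_i)_h$ is one–dimensional, and $D_i\cong F^{f_i}H_i$ with $f_i$ read off from $u_hu_{h'}=f_i(h,h')u_{hh'}$.

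Finally I assemble the matrix structure using $G$-simplicity. As $\eps_i$ is homogeneous of degree $e$, the ideal $B\eps_iB$ is graded and nonzero, hence equals $B$; from $\eps_j\in\eps_j(B\eps_iB)\eps_j=(\eps_jB\eps_i)(\eps_iB\eps_j)$ every corner $\eps_iB\eps_j$ is nonzero. Choosing nonzero homogeneous connecting elements and normalizing them by the same invertibility argument, I obtain homogeneous matrix units $E_{ij}$ with $E_{ij}E_{kl}=\delta_{jk}E_{il}$ and $E_{ii}=\eps_i$. Setting $H=H_1$, $f=f_1$, choosing the chosen generators $u_h$ of $D_1$, and letting $g_{s_i}$ be determined by $\deg E_{ij}=g_{s_i}^{-1}g_{s_j}$ with $g_{s_1}=e$, I would check that the map $u_h\otimes e_{i,j}\mapsto E_{i1}\,u_h\,E_{1j}$ is an algebra homomorphism: indeed $E_{1j}E_{k1}=\delta_{jk}\eps_1$, so it sends the product $u_h\otimes e_{i,j}\cdot u_{h'}\otimes e_{k,l}$ to $\delta_{jk}f(h,h')E_{i1}u_{hh'}E_{1l}$, matching the twisted multiplication. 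A dimension count ($\dim B=r^2|H|$) shows it is an isomorphism $F^fH\otimes M_r(F)\xrightarrow{\sim}B$, and $\deg(E_{i1}u_hE_{1j})=g_{s_i}^{-1}hg_{s_j}$ is exactly the asserted grading $C_g=\span_F\{u_h\otimes e_{i,j}:g=g_{s_i}^{-1}hg_{s_j}\}$. Since $E_{ii}=\eps_i$ and $1=\sum_i\eps_i$ lie in $B_e$, the final homogeneity statement is immediate.

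The main obstacle is this last assembly: one must verify that the connecting elements can be taken homogeneous and normalized \emph{coherently} into a genuine matrix-unit system, and that conjugation by them carries the individual cocycles $f_i$ onto a single cocycle $f$ (up to coboundary), so that one twisted group algebra $F^fH$ with conjugate supports $g_{s_i}^{-1}Hg_{s_i}$ on the corners suffices. Confirming that the bookkeeping yields precisely $g=g_{s_i}^{-1}hg_{s_j}$ — where the subgroup $H=\supp(\eps_1B\eps_1)$ and the tuple $(g_{s_1},\dots,g_{s_r})$ enter — is the delicate cohomological point of the argument.
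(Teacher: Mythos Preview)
The paper does not contain a proof of this theorem: it is quoted verbatim from \cite{BSZ} (Bahturin--Sehgal--Zaicev) and used as a black box in the subsequent analysis of $K$-components. There is therefore no ``paper's own proof'' to compare your attempt against.

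That said, your sketch is essentially the standard argument from \cite{BSZ}: pass to the semisimple $e$-component, choose a complete system of primitive idempotents there, recognize each diagonal corner $\eps_iB\eps_i$ as a twisted group algebra via its support subgroup, and glue the corners with homogeneous matrix units. The trace-form device you use to obtain semisimplicity of $B_e$ and invertibility of homogeneous corner elements is a clean way to avoid quoting heavier graded Wedderburn machinery. The point you flag as the ``main obstacle'' --- choosing the connecting elements $E_{1j}$ coherently so that all diagonal cocycles $f_i$ collapse to a single $f$ and the degrees assemble into $g_{s_i}^{-1}hg_{s_j}$ --- is indeed where the work lies; once you fix $E_{1j}$ and set $E_{ij}=E_{i1}E_{1j}$, conjugation by $E_{i1}$ identifies $D_i$ with $D_1$ and forces $H_i=g_{s_i}^{-1}Hg_{s_i}$ together with $f_i$ cohomologous to $g_{s_i}(f)$, so no separate cohomological argument is needed. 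One small omission: you should state explicitly that $F$ is algebraically closed (as it is in the paper's setting), since you use $\eps_iB_e\eps_i=F\eps_i$.
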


\begin{remark}
Note that the grading above induces $G$-gradings on the subalgebras
$F^{f}H\otimes F$ and $F\otimes M_{r}(F)$ which are \textit{fine}
and \textit{elementary} respectively (see \cite{AHN}, \cite{BSZ_0},
\cite{BS_0} for the precise definitions).
\end{remark}

Let us analyze the $K$ component of a $G$-graded simple algebra
$B$ using the terminology of Theorem \ref{Bahturin Zaicev Sehgal}.

\begin{lemma} \label{double cosets}

Let $B$ as in Theorem \ref{Bahturin Zaicev Sehgal}. Then the following hold.

\begin{enumerate}

\item

A basis element $u_{h}\otimes e_{i,j}$ is in the $K$-homogeneous
component of $B$ if and only if $g_{s_i}^{-1}hg_{s_j}\in K$.

\item

Let $(i,j)$ be any pair of indices $1 \leq i,j\leq r$ $($in the
elementary grading of $M_{r}(F)$$)$. Then there exists a basis
element $u_{h}\otimes e_{i,j}$ in the $K$-homogeneous component of
$B$ if and only if $g_{s_i}$ and $g_{s_j}$ determine the same $H-K$
double coset in $G$.

\item

The relation ``~$i \sim j$ if and only if $g_{s_i}$ and $g_{s_j}$
determine the same $H-K$ double coset in $G$" is an equivalence
relation on the set of indices  $\{1,\ldots,r\}$.

\item

Fix $i$, $1\leq i \leq r$, and let $[i]$ be the equivalence class it
represents. Then the set of basis elements $u_{h}\otimes e_{j,k}$
which belong to the $K$-homogeneous component $B_{K}$ of $B$ such
that $[j]=[k]=[i]$ span a $K$-graded simple algebra. Furthermore, if
we denote this $K$-graded simple algebra by $B_{K,i}$, then
$B_{K,i}\cong F^{g_{s_i}(f)}(g_{s_i}^{-1}Hg_{s_i}\cap K)\otimes
M_{\pi_i}(F)$ where $\pi_i$ is the number of indices in
$\{1,\ldots,r\}$ which are equivalent to $i$ and $g_{s_i}(f)$ is the
action of $g_{s_i}$ on the $2$-cocycle $f$ given by
$g_{s_i}(f)(g_{s_i}^{-1}hg_{s_i},g_{s_i}^{-1}\widehat{h}g_{s_i})=f(h,\widehat{h})$
for any $h, \widehat{h}$ in $H$. In particular, the dimension of
$B_{K,i}$ is $|g_{s_i}^{-1}Hg_{s_i} \cap K|\pi_{i}^{2}$.
\item

Let $\Theta$ be a set of indices in $\{1,\ldots,r\}$ which represent the different equivalence classes.
Then there is an isomorphism of $K$-graded algebras $B_{K}\cong \oplus_{i\in \Theta}B_{K,i}$.

\end{enumerate}

\end{lemma}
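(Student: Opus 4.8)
The plan is to read parts (1)--(3) straight off the explicit grading in Theorem \ref{Bahturin Zaicev Sehgal}, to prove (4) by first passing to the $G$-simple subalgebra of $B$ supported on the index class $[i]$ and then replacing the defining $r$-tuple by a representative adapted to the $H-K$ double coset, and to deduce (5) from the vanishing of products of matrix units across distinct classes. For (1): by Theorem \ref{Bahturin Zaicev Sehgal} the element $u_h\otimes e_{i,j}$ is homogeneous of degree $g_{s_i}^{-1}hg_{s_j}$ and $B_K=\bigoplus_{k\in K}B_k$, so $u_h\otimes e_{i,j}\in B_K$ precisely when $g_{s_i}^{-1}hg_{s_j}\in K$. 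For (2): such an $h\in H$ exists iff $hg_{s_j}\in g_{s_i}K$ for some $h\in H$, i.e. iff $g_{s_j}\in Hg_{s_i}K$ (using $H^{-1}=H$), which is exactly the condition that $g_{s_i}$ and $g_{s_j}$ lie in the same $H-K$ double coset. For (3): the double cosets $HgK$ partition $G$, so pulling this partition back along $j\mapsto g_{s_j}$ yields an equivalence relation on $\{1,\ldots,r\}$.

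For (4), fix $i$, set $\pi_i=|[i]|$, and let $B^{[i]}$ be the span of all $u_h\otimes e_{j,k}$ with $j,k\in[i]$ and $h\in H$ arbitrary. Since $e_{j,k}e_{k,k''}=e_{j,k''}$ keeps matrix indices inside $[i]$, $B^{[i]}$ is a $G$-graded subalgebra of $B$, and it is $G$-graded isomorphic to $F^{f}H\otimes M_{\pi_i}(F)$ with the elementary grading given by the subtuple $(g_{s_j})_{j\in[i]}$; by (1) and (2) we have $B_{K,i}=(B^{[i]})_K$. So we may assume $r=\pi_i$ and that all the $g_{s_j}$ lie in one double coset $Hg_{s_i}K$, and write $g_{s_j}=a_jg_{s_i}b_j$ with $a_j\in H$ and $b_j\in K$ (in particular $a_i=b_i=e$). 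It is standard, and a routine verification via a suitable rescaling of basis elements, that left-multiplying the entries of the defining tuple by elements of $H$ produces a $G$-graded isomorphic algebra; applying this to replace $(g_{s_j})_j$ by $(a_j^{-1}g_{s_j})_j=(g_{s_i}b_j)_j$, and using that a $G$-graded isomorphism carries $K$-components onto $K$-components, we may assume outright that $g_{s_j}=g_{s_i}b_j$ with $b_j\in K$.

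With this tuple $u_h\otimes e_{j,k}$ is homogeneous of degree $b_j^{-1}(g_{s_i}^{-1}hg_{s_i})b_k$, which lies in $K$ iff $g_{s_i}^{-1}hg_{s_i}\in K$, i.e. iff $h\in L:=H\cap g_{s_i}Kg_{s_i}^{-1}$. Hence $B_{K,i}$ is spanned by the elements $u_h\otimes e_{j,k}$ with $h\in L$ and $j,k$ arbitrary, and since $(u_h\otimes e_{j,k})(u_{h'}\otimes e_{k,k'})=f(h,h')\,u_{hh'}\otimes e_{j,k'}$ this identifies $B_{K,i}$ with $F^{f|_L}L\otimes M_{\pi_i}(F)$. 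Re-indexing the twisted group algebra factor by $\widetilde h:=g_{s_i}^{-1}hg_{s_i}\in L':=g_{s_i}^{-1}Hg_{s_i}\cap K$ turns $F^{f|_L}L$ into $F^{g_{s_i}(f)}L'$ --- this is the defining property of $g_{s_i}(f)$ --- and turns the degree of $u_h\otimes e_{j,k}$ into $b_j^{-1}\widetilde h\,b_k$, i.e. into the elementary $K$-grading with tuple $(b_j)_{j\in[i]}$. This is precisely the Bahturin--Sehgal--Zaicev presentation of a $K$-simple algebra, so $B_{K,i}$ is $K$-graded simple, $B_{K,i}\cong F^{g_{s_i}(f)}(g_{s_i}^{-1}Hg_{s_i}\cap K)\otimes M_{\pi_i}(F)$, and $\dim B_{K,i}=|L'|\pi_i^2=|g_{s_i}^{-1}Hg_{s_i}\cap K|\,\pi_i^2$, which is (4). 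For (5): by (2) each basis element $u_h\otimes e_{j,k}$ of $B_K$ has $[j]=[k]$ and hence lies in $B_{K,i}$ for the representative $i\in\Theta$ of $[j]$; so the subalgebras $B_{K,i}$, $i\in\Theta$, are spanned by disjoint sets of basis vectors and together span $B_K$, and for $i\neq i'$ in $\Theta$, $j,k\in[i]$, $j',k'\in[i']$ one has $k\neq j'$, hence $e_{j,k}e_{j',k'}=0$ and $B_{K,i}B_{K,i'}=0$; thus each $B_{K,i}$ is a two-sided ideal of $B_K$ and $B_K\cong\bigoplus_{i\in\Theta}B_{K,i}$ as $K$-graded algebras.

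The step I expect to be the main obstacle is the cocycle-and-grading bookkeeping inside (4): tracking the $2$-cocycle $f$ and the elementary part of the grading through the passage to $B^{[i]}$ and the change of defining tuple, so that the algebra that emerges is literally $F^{g_{s_i}(f)}(g_{s_i}^{-1}Hg_{s_i}\cap K)\otimes M_{\pi_i}(F)$ with the asserted $K$-grading (in particular, exhibiting the normalizing scalars in the tuple-change isomorphism). Once that is in place, parts (1)--(3) and the block decomposition (5) are routine.
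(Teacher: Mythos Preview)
Your argument is correct. For parts (1)--(3) and (5) you and the paper agree that there is essentially nothing to do. For (4) the two proofs share the same underlying idea---choose, for each index $t\sim i$, an element of $H$ that pushes $g_{s_t}$ into $g_{s_i}K$ (your $a_t$ with $g_{s_t}=a_t g_{s_i} b_t$ is the inverse of the paper's $h_t$ with $g_{s_i}^{-1}h_t g_{s_t}\in K$)---but the organization differs. The paper first proves $K$-graded simplicity of $B_{K,i}$ directly, by an ideal argument: take a nonzero $K$-graded ideal, squeeze a nonzero element down to a single basis vector using the idempotents $1\otimes e_{j,j}$ and the fact that distinct $u_{h_s}\otimes e_{j,k}$ lie in distinct homogeneous components, and then generate all of $B_{K,i}$ from it; only afterwards does it write down the explicit isomorphism with $F^{g_{s_i}(f)}(g_{s_i}^{-1}Hg_{s_i}\cap K)\otimes M_{\pi_i}(F)$. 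You instead pass first to the $G$-graded subalgebra $B^{[i]}$, use the (inner) $G$-graded isomorphism coming from left-multiplying the defining tuple by elements of $H$ to normalize to $g_{s_j}=g_{s_i}b_j$, and then read off that the $K$-component is already in Bahturin--Sehgal--Zaicev form, so simplicity comes for free. Your route is a bit cleaner conceptually and avoids the separate ideal argument; the paper's route is more self-contained in that it does not invoke the tuple-change isomorphism (which, as you note, requires a small cocycle computation---it is realized by conjugation by $\sum_j u_{a_j}\otimes e_{j,j}$).
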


\begin{proof} We prove (4) (1-3 and 5 are clear). Let $I$ be a
nonzero $K$-graded $2$-sided ideal in $B_{K,i}$. We need to show
$I=B_{K,i}$. Observe that the idempotents $1\otimes e_{j,j}$,
$[j]=[i]$, belong to the $e$-component of $B$ and hence are in
$B_{K}$. Now, if $z=\sum_{s,j,k}\alpha_{s,j,k}u_{h_{s,j,k}}\otimes
e_{j,k}$ is a non-zero element in $I$, multiplying from left and
right by suitable idempotents $1\otimes e_{j,j}$ we may assume that
$z=z_{j,k}= (\sum\alpha_{s}u_{h_{s}})\otimes e_{j,k}$. But for
different $h_{s}$ in $H$, the basis elements $u_{h_{s}}\otimes
e_{j,k}$ belong to different homogeneous components and so, since
$I$ is $K$-graded, we may assume $z=u_{h}\otimes e_{j,k}$. Take now
any basis element $u_{h^{'}}\otimes e_{u,v}$ in $B_{K,i}$. We need
to show it is in $I$. Indeed, since $[u]=[j]=[k]=[v]$, there exist
$h_{1}$ and $h_{2}$ in $H$ such that $u_{h_{1}}\otimes e_{u,j}$ and
$u_{h_{2}}\otimes e_{k,v}$ are in $B_{K,i}$. Multiplying $z$ from
left and right by these elements we obtain that $I$ contains a basis
element $u_{\widehat{h}}\otimes e_{u,v}$ for some $\widehat{h}\in
H$. Finally we multiply $u_{h^{'}}\otimes e_{u,v}$ from the right by
$(u_{\widehat{h}})^{-1}u_{h^{'}}\otimes e_{v,v}\in B_{K,i}$ and the
first part of (4) is proved. To see the second part of (4) we fix
$h_{t}\in H$ such that $g_{s_i}^{-1}h_{t}g_{s_t}\in K$ for every
$t\sim i$ (if $t=i$ we may take $h_t=e$). Then, any basis element
$u_{h}\otimes e_{t,k} \in~B_{K,i}$ can be written uniquely as
$\lambda u^{-1}_{h_{t}} u_{\bar{h}}u_{h_{k}}\otimes e_{t,k}$ where
$\lambda \in F^{*}$ and $g^{-1}_{s_i}\bar{h}g_{s_i}\in~
g^{-1}_{s_i}Hg_{s_i}\cap~K$. It follows that if we equip
$M_{\pi_i}(F)$ with the elementary $K$-grading given by the
$\pi_i$-tuple

$$
(e=~g_{s_i}^{-1}eg_{s_i},~
g_{s_i}^{-1}h_{j_2}g_{s_{j_2}},~g_{s_i}^{-1}h_{j_3}g_{s_{j_3}},~.~.~.~,
g_{s_i}^{-1}h_{j_{\pi_i}}g_{s_{j_{\pi_i}}})
$$
where $(i=~j_1,j_2, j_3,\ldots,j_{\pi_i})$ runs over all indices in
$(1,\ldots,r)$ which are equivalent to $i$, then the map

$$
u^{-1}_{h_{t}}u_{\bar{h}}u_{h_{k}}\otimes e_{t,k} \longmapsto
v_{g_{s_i}^{-1}\bar{h}g_{s_{i}}}\otimes e_{\mu,\nu} , ~~~~~~~
j_{\mu}=t, j_{\nu}=k
$$
is an isomorphism of $K$-graded algebras of $B_{K,i}$ with
$F^{g_{s_i}(f)}(g_{s_i}^{-1}Hg_{s_i}\cap K)\otimes~M_{\pi_i}(F)$.
Details are omitted.
\end{proof}

Consider now the decomposition $A \cong S_{G} \oplus J$ where
$S_{G}$ is a $G$-graded semisimple algebra and $J$ is the Jacobson
radical. Let $S_{G}=(S_{G})_1\oplus \ldots\oplus (S_{G})_q$ be the
decomposition of $S_{G}$ into the direct sum of its $G$-simple
components. In view of the above decomposition we will consider
\textit{homogeneous} elements that belong to $J$ (radical elements)
and basis elements of the form $u_h\otimes e_{i,j} \in F^{f}H\otimes
M_{r}(F)$ (semisimple elements) where $F^{f}H\otimes M_{r}(F)$ is a
$G$-simple component of $S_G$.

We can now proceed to prove Theorem \ref{Main}.

Let
$$\Lambda=z_{1}v_{1}z_{2}\cdots z_{n}v_{n}z_{n+1}$$
be a (nonzero) monomial of elements of $A$ that realizes the value of
$\exp_{conj}^G(A)$. By linearity we may assume that the $z$'s are semisimple (homogeneous) elements and the
$v$'s are radical (homogeneous). Note also that we may assume that the semisimple
elements belong to \textit{different} $G$-simple components. Indeed,
those that repeat may be ``swallowed'' by the radical elements.

Our goal (as in \cite{Al}) is to replace each one of the
semisimple elements $z_{t}$ by a certain monomial $E_{t}$
(products of homogeneous elements) which roughly speaking, is
sufficiently ``rich", namely it ``visits" different basis elements
in $(S_G)_t$ in a suitable way.

We describe the construction in four steps. Before we start, let us
recall a lemma which is well known.

\begin{lemma} \label{monomial}

Consider the $r\times r$ elementary matrices
$\{e_{i,j}\}_{i,j=1}^{r}$. Then there is a nonzero product of the
$r^{2}$ different elementary matrices of the form $e_{1,1}e_{1,2}e_{2,2} \cdots e_{i,1}$. Clearly the
total value is $e_{1,1}$.

Furthermore by decomposing this product into two parts and
switching them we obtain a nonzero monomial of that kind which
starts with $e_{i,i}$ for any $i=1,\ldots,r$.
\end{lemma}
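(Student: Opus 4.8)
The plan is to recognize the statement as a fact about Eulerian circuits in a directed graph. Since $e_{i,j}e_{k,l}=\delta_{jk}e_{i,l}$, a product $e_{i_1,j_1}e_{i_2,j_2}\cdots e_{i_m,j_m}$ is nonzero exactly when $j_t=i_{t+1}$ for all $t$, in which case it equals $e_{i_1,j_m}$. Hence a nonzero monomial using each of the $r^2$ matrix units exactly once is the same thing as an ordering of the pairs $(i,j)\in\{1,\dots,r\}^2$ in which the second entry of each pair agrees with the first entry of the next one. Viewing $(i,j)$ as a directed edge $i\to j$ in the complete directed graph $\Gamma_r$ on the vertex set $\{1,\dots,r\}$ equipped with a loop at every vertex, such an ordering is precisely an Eulerian trail of $\Gamma_r$.

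First I would check that $\Gamma_r$ has a closed Eulerian circuit. It is connected (there is an edge between any two distinct vertices, and the loops prevent isolated vertices), and at each vertex $i$ the out-degree equals the in-degree equals $r$ (the edges $i\to 1,\dots,i\to r$ leaving, the edges $1\to i,\dots,r\to i$ entering, the loop counting once on each side). By Euler's theorem $\Gamma_r$ therefore admits a closed Eulerian circuit, and since the loop at each vertex must be traversed, this circuit visits every vertex. Reading it off starting at vertex $1$ yields an ordering $f_1,\dots,f_{r^2}$ of all the matrix units whose product is nonzero, and because the circuit closes up at $1$ this product equals $e_{1,1}$; the displayed sample $e_{1,1}e_{1,2}e_{2,2}\cdots e_{i,1}$ is simply one such circuit (loop at $1$, then edge $1\to 2$, then loop at $2$, and so on).

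For the second assertion, fix $i$. The loop $e_{i,i}$ occurs somewhere in the product just constructed, so we may split that product as a concatenation $P\,Q$ in which $Q$ begins with $e_{i,i}$; then $P$ runs from vertex $1$ to vertex $i$ and $Q$ runs from vertex $i$ back to vertex $1$. Switching the two pieces, the word $QP$ uses the same $r^2$ matrix units, begins with $e_{i,i}$, and is again nonzero because $Q$ ends at vertex $1$, where $P$ begins; its value is $e_{i,i}$. If one prefers to avoid quoting Euler's theorem, the same monomials can be produced by induction on $r$: starting from a nonzero product of the matrix units of $M_{r-1}(F)$ equal to $e_{1,1}$, one inserts after an occurrence of $e_{j,j}$ the excursion $e_{j,r}e_{r,j}$ for each $j<r$ (and $e_{1,r}e_{r,r}e_{r,1}$ for one chosen value of $j$, to pick up the loop at $r$), obtaining the corresponding product for $M_r(F)$. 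There is no real obstacle in either route; the only point to watch is that the circuit be \emph{closed}, which is exactly what legitimizes the decompose-and-switch step used for the last assertion.
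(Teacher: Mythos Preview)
Your argument is correct. The paper itself offers no proof of this lemma---it is introduced with the phrase ``a lemma which is well known'' and then simply stated---so there is nothing to compare against at the level of technique. Your Eulerian-circuit reformulation is the natural way to justify it: the identification of nonzero monomials in the $e_{i,j}$ with trails in the complete directed graph (with loops) on $\{1,\dots,r\}$ is exact, the in-degree/out-degree count is right, and the decompose-and-switch step is precisely the standard cyclic rotation of a closed Eulerian circuit, which is legitimate because the circuit returns to its starting vertex. One small wording quibble in your inductive alternative: when you write ``$e_{1,r}e_{r,r}e_{r,1}$ for one chosen value of $j$'', you evidently mean that for the single index $j=1$ you replace the short excursion $e_{1,r}e_{r,1}$ by $e_{1,r}e_{r,r}e_{r,1}$; saying so explicitly would remove any ambiguity, but the count $(r-1)^2+2(r-1)+1=r^2$ and the nonvanishing of the resulting product are both clear.
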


\underline{Step 1} (construction of $\Lambda_1$)

Multiply each one of the $z_{t}~'s$  in $\Lambda$ by a primitive
idempotent $1\otimes e_{i,i}$ of the corresponding $G$-simple
component, say on the right, so that the product remains non zero
(clearly such idempotent exists by linearity). We denote these
idempotents by $x_{t}$, $t=1,\ldots,n+1$.

\underline{Step 2} (construction of $\Lambda_2$)

Replace each one of the idempotents $x_{t}$ ($ = 1\otimes e_{i,i}$
say) by a nonzero monomial $X_{t}$ of the form $1\otimes
e_{i,i}\times \cdots \times 1\otimes e_{j,i}$ (see~ Lemma
\ref{monomial}). Here, if the idempotent $x_{t}= 1\otimes e_{i,i}$
belongs to the $G$-simple component $(S_G)_t$ $(\cong F^{f}H\otimes
M_{r}(F))$ then the cardinality of $X_{t}$ is $r^2$.

\underline{Step 3} (construction of $\Lambda_3$)

Replace each one of the idempotents $a_{t,k}=1\otimes e_{k,k}$ in
$X_{t}$, $t=1,\ldots,n+1$, by a monomial $Y_{t,k}$ of the form

$$
u_{h_1}\otimes e_{k,k} \times a_{k} \times u_{h_2}\otimes e_{k,k}
\times \cdots \times u_{h_p}\otimes e_{k,k} \times a_{k}
$$
where $p=ord(H)$ and the set $\{h_1, h_1h_2,\ldots, h_1h_2\cdots
h_p\}$ runs over all elements of $H$.

We denote by $\widehat{X}_{t}$ the monomial obtained from $X_{t}$
and by $\Lambda_3$ the monomial obtained from $\Lambda_2$.

\underline{Step 4} (construction of $\widehat{\Lambda}=\Lambda_4$)

The objective of this step is nothing but to ``clean" the monomial

$$
\Lambda_3=z_{1}\widehat{X}_{1}v_{1}z_{2}\widehat{X}_{2}\cdots z_{n}\widehat{X}_{n}v_{n}z_{n+1}\widehat{X}_{n+1}
$$

Indeed, we throw $z_{1}$ from $\Lambda_3$ and replace
$v_{t}z_{t+1}$ by a new radical homogeneous element which we
denote again by $v_{t}$.

\begin{remark}

Clearly (by construction) the monomial

$$
\widehat{\Lambda}=\widehat{X}_{1}v_{1}\widehat{X}_{2}\cdots \widehat{X}_{n}v_{n}\widehat{X}_{n+1}
$$

is non zero and realizes $\exp_{G}^{Conj}(A)$.

\end{remark}

Our next task, roughly speaking, is to construct a certain subset of
$G$, denoted by $\Omega_{0}$, which will ``determine" different
decompositions of the monomial $\widehat{\Lambda}$. The set
$\Omega_{0}$ will play a decisive role in the proof of Theorem
\ref{Main}.

Write $\widehat{\Lambda}=b_1b_2b_3\cdots b_d$ where $b_l$ is
either a basis element of the form $u_h\otimes e_{i,j}$ which
appears in $\widehat{X}_{t}$, $t=1,\ldots,n+1$ or a radical
element $v_{t}$ for some $t=1,\ldots,n$.

Consider the subwords of $\widehat{\Lambda}$ of the form

$$
b_1,~b_1b_2,~b_1b_2b_3,~\ldots,~b_1b_2\cdots b_d.
$$

For each subword we consider its homogeneous degree in $G$ and we
let $\Omega$ be the set of elements in $G$ obtained in that way.
Clearly, any element in $g\in \Omega$ may be the homogeneous degree
of more than one subword of the form $b_1b_2\cdots b_j$ in
$\widehat{\Lambda}$ and so we let $\mu(g)\in \{1,\ldots,d\}$ be the
minimal length $j$ it appears. We refer to $\mu(g)$ as the length of
$g\in \Omega$ in  $\widehat{\Lambda}$. Next we consider the set
$\Pi$ of left $K$-cosets of $G$ that are represented by elements of
$\Omega$ and let $\Omega_{0}$ be the set of representatives for the
left $K$-cosets (in $\Pi$) of minimal length in $\widehat{\Lambda}$.

\begin{definition}
Let $b_jb_{j+1}\cdots b_{j+k}$ be a $G$-graded monomial and $S$ a nonempty subset of $G$.

\begin{enumerate}

\item

We say that $b_jb_{j+1}\cdots b_{j+k}$ is an \textit{$S$-monomial},
if its homogeneous degree belongs to $S$.

\item

We say that $b_jb_{j+1}\cdots b_{j+k}$ has no \textit{proper}
$S$-submonomial if either $k=0$ or else, there is \textit{no}
$S$-monomial of the form $b_jb_{j+1}\cdots b_{j+p}$ where $0\leq p <
k$. Note that still there may exist an $S$-monomial of the form
$b_{j+u}\cdots b_{j+q}$ where $1 < u \leq q \leq k$.
\end{enumerate}

\end{definition}

For any $g\in \Omega_{0}$ we consider the decomposition of

$$
\widehat{\Lambda} = X_{g}\Sigma_{K,1}\Sigma_{K,2}\cdots\Sigma_{K,d}Y(g)
$$
where
\begin{enumerate}

\item

$X_{g}$ is a $g$-monomial and has no $g$-submonomial.

\item

Each $\Sigma_{K,j}$ is a $K$-monomial and has no $K$-submonomial.

\item

$Y(g)$ is an homogeneous monomial (which depends on $g\in G$) and has no $K$-submonomial.

\end{enumerate}

\begin{remark}
Note that by the definition of the set $\Omega_{0}$, the above
decomposition of $\widehat{\Lambda}$ exists and is unique for any
$g\in \Omega_{0}$.

\end{remark}

Consider the decomposition of $\widehat{\Lambda}$ determined by $g
\in \Omega_{0}$. Clearly, the $g$-monomial $X_{g}$ and the
$K$-monomials $\Sigma_{K,j}$ may be multiplied from the right (and
give a nonzero product) by a unique primitive idempotent of the form
$1\otimes e_{i,i}$ which belongs to one of the $G$-simple components
$(S_{G})_{t}$ of $S$. We refer to these idempotents as the $K$-stops
determined by $g\in \Omega_{0}$ in $\widehat{\Lambda}$.

\begin{lemma} \label{accounts of visits I}
With the above notation we have:

\begin{enumerate}

\item

Each $g \in \Omega_{0}$ determines an ordered set
$\{\alpha_{g,1},\ldots, \alpha_{g,n_g}\}$ of $K$-stops in
$\widehat{\Lambda}$.

\item

Each $K$-stop $\alpha_{g,i}$ determines uniquely a $G$-simple
component $(S_{G})_{\alpha_{g,i}}$. Moreover it determines
uniquely a $K$-simple component $(S_{G})_{K,\alpha_{g,i}}$. We
refer to these $K$-simple components as the $K$-simple components
determined by $g \in \Omega_{0}$.

\item

If two different $K$-stops $\alpha_{g,i}$ and $\alpha_{g,j}$ (same
$g$) determine the same $G$-simple component then they determine
also the same $K$-simple component.

\item

Each $K$-simple component $($in any $G$-simple component$)$ is
determined by a $K$-stop $\alpha_{g,i}$ for some $g \in \Omega_{0}$.
\end{enumerate}

\end{lemma}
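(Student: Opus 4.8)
The plan is to treat (1) and (2) as bookkeeping around the definition of a $K$-stop, and to do the real work in (3) and (4). For (1): by the remark preceding the lemma the decomposition $\widehat{\Lambda}=X_{g}\Sigma_{K,1}\cdots\Sigma_{K,m}Y(g)$ exists and is unique for each $g\in\Omega_{0}$, and listing the $K$-stops of $X_{g},\Sigma_{K,1},\dots,\Sigma_{K,m}$ in the order in which these blocks occur in $\widehat{\Lambda}$ yields the asserted ordered set. The only real point is that the right-multiplying primitive idempotent is unique: a prefix of $\widehat{\Lambda}$ ending one of these blocks either ends in a basis element $u_{h}\otimes e_{c,d}$ of a $G$-simple component $(S_{G})_{t}$ -- in which case $1\otimes e_{d,d}$ fixes the prefix while every other $1\otimes e_{a,a}$ annihilates it, using orthogonality of matrix units and $(S_{G})_{t}(S_{G})_{s}=0$ for $s\neq t$ -- or ends in a radical letter $v_{l}$, which sits just before the block $\widehat{X}_{l+1}$, in which case the first letter $u_{h_{1}}\otimes e_{i_{l+1},i_{l+1}}$ of $\widehat{X}_{l+1}$ shows that $1\otimes e_{i_{l+1},i_{l+1}}$ is the unique such idempotent. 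For (2): a $K$-stop $1\otimes e_{a,a}$ lies in a single simple summand $(S_{G})_{t}$ of $S_{G}=\bigoplus_{s}(S_{G})_{s}$, the $G$-simple component it determines; being homogeneous of degree $e\in K$ it lies in $(S_{G})_{t,K}$, hence by parts (4)--(5) of Lemma \ref{double cosets} in the single $K$-simple summand $(S_{G})_{t,K,[a]}$ indexed by the $H_{t}-K$ double coset $[a]$ of the index $a$, which is the $K$-simple component it determines.

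For (3), suppose $\alpha_{g,i}=1\otimes e_{a,a}$ and $\alpha_{g,j}=1\otimes e_{b,b}$ with $i<j$ both lie in the same $(S_{G})_{t}\cong F^{f_{t}}H_{t}\otimes M_{r_{t}}(F)$. Since $\widehat{X}_{t}$ is the only block of $\widehat{\Lambda}$ built from letters of $(S_{G})_{t}$, the hypothesis forces the prefixes $P_{i}$ and $P_{j}$ that $\alpha_{g,i}$ and $\alpha_{g,j}$ respectively right-multiply to end inside (or at the left end of) $\widehat{X}_{t}$; write $P_{i}=WQ_{i}$ and $P_{j}=WQ_{j}$ with $W$ the common word preceding $\widehat{X}_{t}$ and $Q_{i}$ an initial segment of $Q_{j}$ inside $\widehat{X}_{t}$, so that the piece between the two $K$-stops, $R=\Sigma_{K,i}\cdots\Sigma_{K,j-1}$ with $Q_{j}=Q_{i}R$, is a sub-word of $\widehat{X}_{t}$ with $\deg R\in K$. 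Now I would exploit the structure of $\widehat{X}_{t}$: the walk through all $r_{t}^{2}$ matrix units in Step 2 starts at $e_{i_{t},i_{t}}$ (where $x_{t}=1\otimes e_{i_{t},i_{t}}$ is the idempotent chosen in Step 1), and the Step-3 insertions leave the row index unchanged, so every partial product of $\widehat{X}_{t}$ is a scalar times $u_{h}\otimes e_{i_{t},d}$. Hence $\deg Q_{i}=g_{s_{i_{t}}}^{-1}h'g_{s_{a}}$ and $\deg Q_{j}=g_{s_{i_{t}}}^{-1}h''g_{s_{b}}$ for some $h',h''\in H_{t}$, and therefore
$$
K\ni\deg R=(\deg Q_{i})^{-1}\deg Q_{j}=g_{s_{a}}^{-1}\bigl((h')^{-1}h''\bigr)g_{s_{b}}.
$$
By parts (1)--(2) of Lemma \ref{double cosets} this says that $g_{s_{a}}$ and $g_{s_{b}}$ determine the same $H_{t}-K$ double coset, i.e.\ $[a]=[b]$, and then (2) gives $(S_{G})_{K,\alpha_{g,i}}=(S_{G})_{K,\alpha_{g,j}}$.

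For (4), fix a $G$-simple component $(S_{G})_{t}$ occurring in $\widehat{\Lambda}$ and a $K$-simple component $(S_{G})_{t,K,[c]}$ of it, $c\in\{1,\dots,r_{t}\}$. The walk of $\widehat{X}_{t}$ visits the vertex $c$ (Step 3 even inserts a block $Y_{t,c}$ there), so there is a prefix $Q$ of $\widehat{X}_{t}$ whose last letter lies in column $c$; let $P=WQ$ be the corresponding prefix of $\widehat{\Lambda}$, put $g'=\deg P\in\Omega$, and let $g\in\Omega_{0}$ be the chosen representative of the left coset $g'K$. The crucial observation is that, because $\mu(g)$ is minimal among the lengths of the elements of $\Omega$ lying in $gK$, no prefix of $\widehat{\Lambda}$ shorter than $X_{g}$ has degree in $gK$; consequently the $K$-stops determined by $g$ occur at exactly the positions of $\widehat{\Lambda}$ at which the running degree lies in $gK$. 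Since $\deg P=g'\in gK$, the end of $P$ is one of these positions, and since $P$ ends in a column-$c$ basis element of $(S_{G})_{t}$, the $K$-stop there is $1\otimes e_{c,c}$; by (2) this $K$-stop determines $(S_{G})_{t,K,[c]}$. As $t$ and $[c]$ were arbitrary, (4) follows.

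I expect the main obstacle to be (4), together with the step in (3) that confines $R$ to a single block: one has to make the ``richness'' built into the modified monomials $\widehat{X}_{t}$ in Steps 2--3 quantitatively precise -- which partial products occur, and in which row and column -- and then dovetail this with the definition of $\Omega_{0}$ so that the prefix $P$ genuinely lands at a $K$-stop of some $g\in\Omega_{0}$ rather than at a position masked before the end of $X_{g}$.
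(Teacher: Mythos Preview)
Your proposal is correct and follows essentially the same route as the paper. The paper dismisses (1) and (2) as ``directly from the construction'' where you spell out the uniqueness of the idempotent; your treatment of (4) is the paper's argument verbatim, only with the key observation (that $g\in\Omega_0$ means $\mu(g)$ is minimal over $gK\cap\Omega$, so \emph{every} position with running degree in $gK$ is a $K$-stop for $g$) stated more sharply. The one genuine stylistic difference is in (3): the paper argues that the segment between the two $K$-stops is a product of semisimple $K$-monomials inside $(S_G)_t$, so by Lemma~\ref{double cosets}(5) (the direct-sum decomposition of $(S_G)_{t,K}$) two $K$-stops in different $K$-simple summands would force a zero product; you instead compute $\deg R=g_{s_a}^{-1}hg_{s_b}\in K$ explicitly and invoke Lemma~\ref{double cosets}(2). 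Both are valid and rest on the same structural lemma; your version is more computational, the paper's more conceptual.
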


\begin{proof}

Parts (1) and (2) follow directly from the construction. Part (3)
follows easily from the fact that between two $K$-stops in the same
$G$-simple component we have only semisimple $K$-monomials. Hence
having $K$-stops which belong to different $K$-simple components
would yield a zero product.

For the proof of (4) we recall first that any $K$-simple component
in $F^{f}H\otimes M_{r}(F)$ is determined by an equivalence class of
indices $\{1,\ldots,r\}$ (see Lemma \ref{double cosets}). Fix a
$K$-simple component $[i]$ in a $G$-simple component, say
$(S_{G})_{t}\cong F^{f}H\otimes M_{r}(F)$. This says that there is a
primitive idempotent $1\otimes e_{j,j}$ in $(S_{G})_{t}$, with $j\in
[i]$, that can be inserted on the right of one of the $b's$, say
$b_{m}$ in $\widehat{\Lambda}=b_1b_2b_3\cdots b_d$ and yield a non
zero product. Let $g \in G$ be the homogeneous degree of the
monomial $b_1b_2b_3\cdots b_m$. Clearly $g \in \Omega$ and hence
$gK$, the left $K$-coset represented by $g$, intersects $\Omega$
non-trivially. Observe that by the definition of $\Omega$, any
element $g{'} \in gK \cap \Omega$ determines a $K$-stop in
$\widehat{\Lambda}=b_1b_2b_3\cdots b_d$ and hence if $\mu(g)\leq
\mu(g{'})$ (the lengths in $\widehat{\Lambda}$) we have that any
$K$-stop in $\widehat{\Lambda}=b_1b_2b_3\cdots b_d$ determined by
$g{'}$ is determined also by $g$. We conclude that if $g$ is the
representative $gK \cap \Omega$ of minimal length in
$\widehat{\Lambda}$ (hence in $\Omega_{0}$) it determines the
$K$-simple component represented by $[i]$.

\end{proof}

The next lemma is key. It computes the number of elements in
$\Omega_{0}$ which determine the different $K$-simple components.

Fix $g$ in $\Omega_{0}$ and let

$$
\widehat{\Lambda} =
X_{g}\Sigma_{K,1}\Sigma_{K,2}\cdots\Sigma_{K,j}\cdots\Sigma_{K,d}Y(g)
$$
be the corresponding decomposition of $\widehat{\Lambda}$. Consider the $K$-stop
$\alpha_{g,j}$ determined by
$X_{g}\Sigma_{K,1}\Sigma_{K,2}\cdots\Sigma_{K,j}$.

By construction, $\alpha_{g,j}$ is a primitive idempotent of the
form $1\otimes e_{i,i}$, $1\leq i \leq r$, in the $t$-th $G$-simple
component $(S_{G})_{t}\cong F^{f}H\otimes M_{r}(F)$. Let $k_{1}\in
K$ be the homogeneous degree of the monomial
$\Sigma_{K,1}\Sigma_{K,2}\cdots\Sigma_{K,j}$. In the next lemma we use the same notation as above (in
particular $(g_{s_1},\ldots,g_{s_r})$ is the $r$-tuple in $G^{(r)}$ which determines
the elementary $G$-grading on $M_{r}(F)$).

\begin{lemma} \label{account of visits II}

The following hold.

\begin{enumerate}
\item

Any element in $gk_{1}g_{s_i}^{-1}Hg_{s_i}K$ \textit{which
determines} a $K$-simple component in the $t$-th $G$-simple
component $(S_{G})_{t}$, determines the same $K$-simple component as
$g$ does.

\item

Any $K$-left coset which is contained in
$gk_{1}g_{s_i}^{-1}Hg_{s_i}K$, is represented in $\Omega$ and hence
is represented in $\Omega_{0}$.

\item

Any $\widehat{g}\in G$ that determines the same $K$-simple component
in the $t$-th $G$-simple component as $g$ does, is in
$gk_{1}g_{s_i}^{-1}Hg_{s_i}K$.

\end{enumerate}
\end{lemma}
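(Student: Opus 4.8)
The plan is to analyze the multiplication structure inside the fixed $G$-simple component $(S_{G})_{t}\cong F^{f}H\otimes M_{r}(F)$ via Lemma \ref{double cosets}, using the explicit parametrization of the $K$-simple components by $H$-$K$ double cosets of the elements $g_{s_i}$. Throughout, recall that $\alpha_{g,j}=1\otimes e_{i,i}$ is a $K$-stop, so the homogeneous degree $gk_1$ of the monomial $X_{g}\Sigma_{K,1}\cdots\Sigma_{K,j}$ ending at that idempotent is precisely the degree of a path into the $(i,i)$-diagonal entry; by the elementary grading this forces a relation of the form $gk_1 = (\text{deg up to entering }(S_G)_t)\cdot g_{s_i}^{-1}(\cdots)g_{s_i}\cdot(\text{something in }K)$ — the point being that $gk_1 g_{s_i}^{-1} \in g_{s_i}^{-1}H g_{s_i}\cdot K$ type considerations will govern which diagonal idempotents of $(S_G)_t$ are reachable.

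For part (1), I would argue as follows. Suppose $\widehat{g}\in gk_1 g_{s_i}^{-1}Hg_{s_i}K$ determines \emph{some} $K$-simple component in $(S_{G})_{t}$, i.e.\ $\widehat{g}$ is the degree of a subword $b_1\cdots b_m$ that can be completed on the right by a primitive idempotent $1\otimes e_{j,j}$ of $(S_G)_t$. Writing $\widehat{g} = gk_1 g_{s_i}^{-1}h g_{s_i}k$ for some $h\in H$, $k\in K$, I want to show $e_{j,j}$ lies in the equivalence class $[i]$ of Lemma \ref{double cosets}(3), i.e.\ $g_{s_j}$ and $g_{s_i}$ lie in the same $H$-$K$ double coset. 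The key is that reaching $1\otimes e_{j,j}$ from inside $(S_G)_t$ starting at the $(i,i)$-entry (which is where the $K$-stop $\alpha_{g,j}$ sits) via a semisimple $K$-monomial already forces $[i]=[j]$ by Lemma \ref{double cosets}(2) applied to that internal segment; the factor $gk_1 g_{s_i}^{-1}Hg_{s_i}K$ is exactly the set of degrees by which one can move within the idempotents reachable by $K$-monomials emanating from the $(i,i)$-stop inside $(S_G)_t$, using that $u_h\otimes e_{i,j}\in B_K$ iff $g_{s_i}^{-1}hg_{s_j}\in K$. So I would translate the double-coset condition $Hg_{s_i}K = Hg_{s_j}K$ into the group-element statement $\widehat{g}\in gk_1 g_{s_i}^{-1}Hg_{s_i}K$ and check the two implications are literally the same condition after cancelling the path $gk_1$ that brings us to the $(i,i)$-entry.

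For part (3), the converse: if $\widehat{g}$ determines the \emph{same} $K$-simple component $[i]$ in $(S_G)_t$, then some $1\otimes e_{j,j}$ with $j\sim i$ completes a subword of degree $\widehat{g}$; by Lemma \ref{double cosets}(2), $g_{s_j}^{-1}hg_{s_i}\in K$ for suitable $h$, hence $g_{s_j}\in Hg_{s_i}K$, and unwinding the degree bookkeeping — the path from the entry point of $(S_G)_t$ to $e_{j,j}$ differs from the path to $e_{i,i}$ (which has degree $gk_1$ relative to the start) by an element of $g_{s_i}^{-1}Hg_{s_i}K$ — gives $\widehat{g}\in gk_1 g_{s_i}^{-1}Hg_{s_i}K$. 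Part (2) is the most combinatorially delicate: I must show every left $K$-coset inside $gk_1 g_{s_i}^{-1}Hg_{s_i}K$ is actually \emph{realized} as the degree of an honest subword of $\widehat{\Lambda}$, i.e.\ is in $\Omega$. This is where Steps 2 and 3 of the construction earn their keep: Step 2 inserts a full nonzero product $X_t$ of \emph{all} $r^2$ matrix units $e_{i',j'}$ (Lemma \ref{monomial}), and Step 3 replaces each diagonal idempotent by a monomial $Y_{t,k}$ whose partial products $u_{h_1}, u_{h_1h_2},\ldots$ run over \emph{all} of $H$. Combining these, starting from the $K$-stop at the $(i,i)$-entry one can realize, as degrees of subwords, every element of the form $g_{s_i}^{-1}hg_{s_i}$ (running over $h\in H$, from Step 3) times every $K$-element needed to move among equivalent indices (from Step 2, since moving to $e_{j,j}$ with $j\sim i$ contributes a $K$-degree), and any left $K$-coset in the product $g_{s_i}^{-1}Hg_{s_i}K$ is hit; prepending the path of degree $gk_1$ lands us in $\Omega$. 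I expect part (2) — verifying that the richness injected in Steps 2--3 is exactly enough to realize \emph{every} $K$-coset in the double-coset-sized set — to be the main obstacle, requiring careful tracking of which matrix-unit and which $u_h$ factors contribute which group degree, but no genuinely new idea beyond the explicit constructions already set up.
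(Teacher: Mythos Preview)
Your overall strategy matches the paper's: exploit the elementary grading on $(S_G)_t\cong F^{f}H\otimes M_r(F)$, identify $K$-simple components with $H$--$K$ double cosets via Lemma~\ref{double cosets}, and translate the hypotheses into group equations that can be cancelled. Parts~(1) and~(3) are indeed formal ``converses'' of the same computation, and part~(2) is where the richness built into $\widehat{\Lambda}$ is used.

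Two points where the paper's argument is sharper than what you sketch:

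\emph{Part (1).} Your reasoning via ``semisimple $K$-monomials emanating from the $(i,i)$-stop'' does not close the gap. The segment in $\widehat{X}_t$ between the $K$-stop $1\otimes e_{i,i}$ (for $g$) and the $K$-stop $1\otimes e_{j,j}$ (for $\widehat g$) need \emph{not} be a $K$-monomial, since these stops come from different decompositions. The correct observation---and the only one the paper uses---is purely structural: any contiguous segment of $\widehat{X}_t$ running from column $i$ to column $j$ has degree in $g_{s_i}^{-1}Hg_{s_j}$, because the matrix-unit indices telescope. This yields $\widehat{g}k_3 = gk_1\,g_{s_i}^{-1}h_2 g_{s_j}$ for some $h_2\in H$, $k_3\in K$; equating with $\widehat{g}=gk_1\,g_{s_i}^{-1}h_1 g_{s_i}k_2$ and cancelling $gk_1 g_{s_i}^{-1}$ gives $h_1 g_{s_i}k_2k_3 = h_2 g_{s_j}$, i.e.\ $j\sim i$ directly. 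No $K$-monomial reasoning is needed.

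\emph{Part (2).} You anticipate more difficulty than exists. Step~2 is not needed here except as a prerequisite making the idempotent $1\otimes e_{i,i}$ appear in $X_t$. The paper's argument is a single line: the set $gk_1\,g_{s_i}^{-1}Hg_{s_i}$ already contains a representative of \emph{every} left $K$-coset in $gk_1\,g_{s_i}^{-1}Hg_{s_i}K$ (it has $|H|$ elements mapping onto the $|H|/|g_{s_i}^{-1}Hg_{s_i}\cap K|$ cosets), and Step~3 guarantees that each element of $gk_1\,g_{s_i}^{-1}Hg_{s_i}$ is the degree of a prefix subword of $\widehat{\Lambda}$ (the partial products through $Y_{t,i}$). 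There is no need to combine this with movement among equivalent indices via Step~2.
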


\begin{proof}

Recall first that the homogeneous degree of a semisimple element
$b_l=u_{h}\otimes e_{i,j}$ is $g_{s_i}^{-1}hg_{s_j}$. Suppose
$\widehat{g}=gk_{1}g_{s_i}^{-1}h_{1}g_{s_i}k_2$ is an element in
$gk_{1}g_{s_i}^{-1}Hg_{s_i}K$ which determines a $K$-stop in the
$t$-th $G$ simple component $(S_{G})_{t}$ with primitive idempotent
$1\otimes e_{j,j}$. We need to show that $j\sim i$.
Indeed, by construction there is $k_3$ in $K$ and $h_2$ in $H$
such that $\widehat{g}k_3=gk_1g_{s_i}^{-1}h_2g_{s_j}$ that is

$$
gk_{1}g_{s_i}^{-1}h_1g_{s_i}k_2k_3=gk_1g_{s_i}^{-1}h_2g_{s_j}.
$$

\smallskip
This shows that $h_1g_{s_i}k_2k_3=h_2g_{s_j}$ and $j\sim i$ as
desired.

Let us prove (2). We are assuming that $g$ determines $1\otimes
e_{i,i}$ in the $t$-th $G$-simple component. Consider the set
$gk_{1}g_{s_i}^{-1}Hg_{s_i}$. Clearly it contains a set of
representatives for the left $K$-cosets which are contained in
$gk_{1}g_{s_i}^{-1}Hg_{s_i}K$. On the other hand, by the
construction of the monomial $\widehat{\Lambda}$ (step 3), each
element in $gk_{1}g_{s_i}^{-1}Hg_{s_i}$ determines a $K$-stop
$1\otimes e_{i,i}$ and the result follows.

To prove (3) suppose $\widehat{g}\in G$ determines the same
$K$-simple component as $g$ does in the $t$-th $G$-simple component.
This means that $\widehat{g}$ determines a $K$-stop $1\otimes
e_{j,j}$ in the $t$-th simple component and $j\sim i$. It follows
that there exist elements $k_1$ and $k_2$ in $K$ and $h$ in $H$ such
that

$$
\widehat{g}k_2=gk_1g_{s_i}^{-1}hg_{s_j}.
$$

\smallskip

But from the condition on relation of $j$ and $i$ we know that
$g_{s_j}$ and $g_{s_i}$ represent the same $H-K$ double coset of
$G$ and hence there exist $k_3$ in $K$ and $h_2$ in $H$ such that
$g_{s_j}=h_2g_{s_i}k_3$. This implies

$$
\widehat{g}k_2=gk_1g_{s_i}^{-1}hh_2g_{s_i}k_3
$$
and hence $\widehat{g}\in gk_{1}g_{s_i}^{-1}Hg_{s_i}K$ as desired.
This completes the proof of the lemma.
\end{proof}

\begin{remark}
We emphasize that parts (1) and (3) of the lemma are not exactly
opposite to each other for it may be the case that an element in
$gk_{1}g_{s_i}^{-1}Hg_{s_i}K$ does not determine any $K$-simple
component in $(S_{G})_{t}$. This is the reason that we couldn't get
through with the set $\Omega$ and we needed a finer set, namely a
set of left $K$-cosets representatives $\Omega_{0}$.
\end{remark}

From our construction we see that any $g\in \Omega_{0}$ determines a
unique $K$-simple component in some of the $G$-simple components
$(S_{G})_{1},\ldots,(S_{G})_{n+1}$. We denote the $K$-simple
components by $B_{g,K,1},\ldots,B_{g,K,n+1}$ where the parameter $g$
says that the $K$-components depend on $g$. We put $B_{g,K,j}=0$ for
those $G$-simple component which are not visited in the
decomposition of $\widehat{\Lambda}$ determined by $g$.

\begin{corollary} \label{number of visits}

Consider all decompositions of the form

$$
\widehat{\Lambda} =
X_{g}\Sigma_{K,1}\Sigma_{K,2}\cdots\Sigma_{K,j}\cdots\Sigma_{K,d}Y(g)
$$
where $g\in \Omega_{0}$.

Take any $K$-simple component, say the $K$-simple component $[i]$ in
the $t$-th $G$-simple components $(S_{G})_{t}$. Let $N_{S_{G}}([i])$
be the number of such decompositions in which the $K$-simple
component $[i]$ is ``visited" (that is the decompositions that
determine a $K$-stop $1\otimes e_{j,j}$ in $(S_{G})_{t}$ and $j\sim
i$). Then

$$
N_{S_{G}}([i])=|Hg_{s_i}K|/|K|
$$

\end{corollary}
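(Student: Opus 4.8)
The plan is to count, for the fixed $K$-simple component $[i]$ in $(S_G)_t$, exactly those $g \in \Omega_0$ whose associated decomposition of $\widehat{\Lambda}$ produces a $K$-stop $1\otimes e_{j,j}$ with $j \sim i$. By Lemma \ref{account of visits II}(3), every such $g$ lies in a fixed coset-union $g_0 k_1 g_{s_i}^{-1} H g_{s_i} K$ (writing $g_0$ for one such element and absorbing the $k_1$-factor, so that the relevant set is simply a \emph{single} $H$-$K$ double coset, translated; in fact after normalizing one sees the set of relevant left $K$-cosets is in bijection with the left $K$-cosets inside $H g_{s_i} K$). By Lemma \ref{account of visits II}(1), \emph{all} elements of this set that determine \emph{some} $K$-simple component in $(S_G)_t$ determine the \emph{same} one, namely $[i]$; and by Lemma \ref{account of visits II}(2), \emph{every} left $K$-coset contained in this set is represented in $\Omega$, hence (by minimal-length selection) exactly one representative of it lies in $\Omega_0$. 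Combining these three facts, $N_{S_G}([i])$ equals the number of left $K$-cosets contained in the double coset $H g_{s_i} K$.

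**The key computation** is then the standard double-coset count: the number of left $K$-cosets inside $H g_{s_i} K$ is $|H g_{s_i} K| / |K|$. This is immediate since $H g_{s_i} K$ is a disjoint union of left $K$-cosets, each of size $|K|$. So $N_{S_G}([i]) = |H g_{s_i} K|/|K|$, which is the claim.

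**The steps in order** would be: (i) recall from Lemma \ref{double cosets} that $[i]$ corresponds to an $H$-$K$ double coset $H g_{s_i} K$, and that a $g \in \Omega_0$ "visits" $[i]$ precisely when it determines a $K$-stop $1 \otimes e_{j,j}$ with $g_{s_j} \in H g_{s_i} K$; (ii) invoke Lemma \ref{account of visits II}(3) to confine all such $g$ to $g_0 k_1 g_{s_i}^{-1} H g_{s_i} K$ and note this set is a union of left $K$-cosets whose count matches that of left $K$-cosets in $H g_{s_i} K$ (the translation by $g_0 k_1 g_{s_i}^{-1}$ on the left and the structure of the set give a bijection on left $K$-cosets); (iii) invoke Lemma \ref{account of visits II}(2) to see every such left $K$-coset is represented in $\Omega$, hence exactly one representative (the minimal-length one) lies in $\Omega_0$; (iv) invoke Lemma \ref{account of visits II}(1) to see each such representative, if it determines a $K$-simple component in $(S_G)_t$ at all, determines $[i]$ — and by construction (step 3 of the monomial construction, producing all of $H g_{s_i} H$ worth of stops) it does determine one; (v) conclude the bijection between $\{g \in \Omega_0 : g \text{ visits } [i]\}$ and the set of left $K$-cosets in $H g_{s_i} K$, and read off the cardinality $|H g_{s_i} K|/|K|$.

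**The main obstacle** I anticipate is bookkeeping the "if it determines a $K$-simple component at all" clause flagged in the Remark after Lemma \ref{account of visits II}: an element of the translated double coset need not a priori determine a $K$-stop in $(S_G)_t$, so I must argue that for the relevant representatives it genuinely does — this is where the richness built into $\widehat{\Lambda}$ by Step 3 of the construction (where the idempotents $1 \otimes e_{k,k}$ were replaced by monomials $Y_{t,k}$ sweeping over all of $H$) is essential, guaranteeing that every element of $g_0 k_1 g_{s_i}^{-1} H g_{s_i}$ actually determines the $K$-stop $1 \otimes e_{i,i}$, exactly as in the proof of Lemma \ref{account of visits II}(2). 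Once that is in place the rest is the elementary double-coset arithmetic.
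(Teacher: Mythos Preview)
Your proposal is correct and follows essentially the same route as the paper: first invoke Lemma~\ref{accounts of visits I}(4) to guarantee that some $g_0\in\Omega_0$ visits $[i]$, then combine the three parts of Lemma~\ref{account of visits II} to identify the set of visiting $g$'s with the set of left $K$-cosets contained in $g_0k_1g_{s_i}^{-1}Hg_{s_i}K$, and finally observe that this count equals $|Hg_{s_i}K|/|K|$. The paper's written proof is extremely terse (two sentences citing the two lemmas and writing down the final equality); your version is simply the unpacked form of the same argument, and in particular the ``obstacle'' you flag---that a representative in $\Omega_0$ lying in the translated double coset must actually determine a $K$-stop in $(S_G)_t$---is handled exactly as you indicate, via the richness built in at Step~3 (this is the content of the proof of Lemma~\ref{account of visits II}(2), together with the observation in the proof of Lemma~\ref{accounts of visits I}(4) that the minimal-length representative inherits all $K$-stops determined by other elements of its coset in $\Omega$).
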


\begin{proof}

By Lemma \ref{accounts of visits I} part (4), we know that the
$K$-simple component [i] is ``visited" in the decomposition of
$\widehat{\Lambda}$ determined by some element $g\in \Omega_{0}$.
Then by Lemma \ref{account of visits II} we conclude that the number
is

$$
N_{S_{G}}([i])=|gk_1g_{s_i}^{-1}Hg_{s_i}K|/|K|=|Hg_{s_i}K|/|K|.
$$

\end{proof}

We are now ready to complete the proof of Theorem \ref{Main}.
Let us assume

$$
\exp^{Conj}_{G}(A) > [G:K]^2 \exp^{Conj}_{K}(A_{K}).
$$

and get a contradiction.

Recall we are assuming that the monomial

$$
\widehat{\Lambda}=\widehat{X}_{1}v_{1}\widehat{X}_{2}\cdots \widehat{X}_{n}v_{n}\widehat{X}_{n+1}
$$

realizes $\exp^{Conj}_{G}(A)$. Consider the different decompositions

$$
\widehat{\Lambda} =
X_{g}\Sigma_{K,1}\Sigma_{K,2}\cdots\Sigma_{K,j}\cdots\Sigma_{K,d}Y(g)
$$

where $g\in \Omega_{0}$. By the definition of $\exp^{Conj}_{K}(A_{K})$ we have that

$$
\exp^{Conj}_{K}(A_{K}) \geq dim_{F}(B_{g,K,1})+ \cdots +
dim_{F}(B_{g,K,n+1})
$$

for every $g\in \Omega_{0}$. Consequently for every $g\in \Omega_{0}$

$$
\exp^{Conj}_{G}(A) > [G:K]^2 \sum_{j=1}^{n+1} dim_{F}(B_{g,K,j}).
$$

Summing over all elements of $\Omega_{0}$ we have

$$
\sum_{g\in \Omega_{0}} \exp^{Conj}_{G}(A) > [G:K]^2 \sum_{g\in \Omega_{0}} \sum_{j=1}^{n+1} dim_{F}(B_{g,K,j}).
$$

Let us show that this is not the case, that is

$$
\sum_{g\in \Omega_{0}} \exp^{Conj}_{G}(A) \leq [G:K]^2 \sum_{g\in \Omega_{0}} \sum_{j=1}^{n+1} dim_{F}(B_{g,K,j})
$$

$$
=[G:K]^2 \sum_{j=1}^{n+1} \sum_{g\in \Omega_{0}} dim_{F}(B_{g,K,j}).
$$

Now by construction,
$$
\exp^{Conj}_{G}(A)=\sum_{j=1}^{n+1} dim_{F}((S_{G})_{j})
$$
and so, we may prove the above inequality by showing it holds for each $G$-simple component $(S_{G})_{t}$, $t=1,\ldots, n+1$, separately. Thus we will show that

$$
\sum_{g\in \Omega_{0}} dim_{F}(S_{G})_{t} \leq [G:K]^2 \sum_{g\in \Omega_{0}}dim_{F}(B_{g,K,j}).
$$

To see this we adopt the notation of Theorem \ref{Bahturin Zaicev
Sehgal} and of Lemma \ref{double cosets}, namely

$$
(S_{G})_{t}\cong F^{\alpha}H \otimes M_{r}(F)
$$
where $H$ is a subgroup of $G$, $(g_{s_1},\ldots,g_{s_r})\in
G^{(r)}$ is the $r$-tuple which determines the elementary grading on
$M_{r}(F)$, $\{\Phi_1,\ldots,\Phi_m\}$ is the set of the double
$H-K$-cosets in $G$ and $\pi_j$, $j=1,\ldots,m$, is the number of
indices $i$ in $\{1,\ldots,r\}$ such that $g_{s_i}$ represents
$\Phi_j$. Note that by our notation we allow $\pi_j=0$, this would
mean that $\Phi_j$ is not represented by any element in
$(g_{s_1},\ldots,g_{s_r})$.

Now, by Corollary \ref{number of visits}, each (nonzero) $K$-simple
component $B_{g,K,j}$ appears in the sum above precisely
$|Hg_{s_i}K|/|K|$ times where $i$ is the index that determines
$B_{g,K,j}$ (see~Lemma \ref{double cosets}). Furthermore (by the
same lemma, part 4), its dimension is $|g_{s_i}^{-1}Hg_{s_i}\cap
K|\pi_{j}^{2}$.

In view of the paragraph above we need to prove the inequality

$$
| H | (\pi_1 + \cdots + \pi_m)^{2} \leq [G:K]\sum_{j=1}^{m}(|
Hg_{s_i}K |/| K |) | g_{s_i}^{-1}Hg_{s_i}\cap K | \pi_{j}^{2}.
$$

But one knows that

$(| Hg_{s_i}K |/| K |) | g_{s_i}^{-1}Hg_{s_i}\cap K | = | H |$ and hence we need to show that

$$
| (\pi_1 + \cdots + \pi_m)^{2} \leq [G:K] \sum_{i=1}^{m}\pi_{i}^{2}.
$$

As $m$ is the number of double $H-K$-cosets in $G$, we have that $m
\leq [G:K]$, and hence the result will follow if we prove

$$
(\pi_1 + \cdots + \pi_m)^{2} \leq m \sum_{i=1}^{m}\pi_{i}^{2}.
$$

But this is equivalent to  $\sum_{i<j}(\pi_{j}-\pi_{i})^2 \geq 0$
and Theorem \ref{Main} is proved.

\end{section}

\end{document}